\newtheorem{THEOR}{Theorem}[section]
\newtheorem{LEM}[THEOR]{Lemma}
\newtheorem{DEF}[THEOR]{Definition}
\newtheorem{REM}[THEOR]{Remark}
\theoremstyle{definition}
\newtheorem{remark}[THEOR]{Remark}
\numberwithin{equation}{section}
\newcommand\Z{\mathbb Z}
\def\ZZ{\Z}
\newcommand\R{\mathbb R} 
\newcommand\Q{\mathbb Q}
\newcommand\C{\mathbb C}
\newcommand{\PP}{{\mathbb P}}
\def\P{\PP}
\newcommand\A{{\mathsf A}}
\newcommand\X{{\mathsf X}}
\newcommand\Ag{\A_g}
\def\ag{\Ag}
\newcommand{\agd}{\ag^\delta}
\newcommand{\M}{{\mathsf{M}}}
\newcommand{\sieg}{\mathfrak{S}}
\newcommand\T{{\mathsf T}}
\newcommand{\zg}{\mathsf{Z}}
\newcommand{\rgb}{\mathsf{R}_{g,b}}
\newcommand{\pgb}{\mathsf{P}_{g,b}}
\newcommand{\braid}{\mathbf{B}_r}
\newcommand{\tG}{{\tilde{G}}}
\newcommand{\tg}{{\tilde{g}}}
\newcommand{\ttheta}{{\tilde{\theta}}}
\newcommand{\tC} {{\tilde{C}}}
\newcommand{\Hom}{\operatorname{Hom}}
\newcommand{\End}{\operatorname{End}}
\newcommand{\Aut}{\operatorname{Aut}}
\newcommand{\GL}{\operatorname{GL}}
\newcommand{\Sp}{\operatorname{Sp}}
\newcommand{\Pic}{\operatorname{Pic}}
\newcommand{\Spec}{\operatorname{Spec}}
\newcommand{\Nm}{\operatorname{Nm}}
\newcommand{\Fix}{\operatorname{Fix}}
\newcommand{\Mod}{\operatorname{Mod}}
\def\i{\mathrm i}
\renewcommand{\phi}{\varphi}
\newcommand{\sx}{\langle}
\newcommand{\xs}{\rangle}
\newcommand{\lra}{\longrightarrow}
\newcommand{\ra}{\rightarrow}
\newcommand{\MAGMA}{\texttt{MAGMA}}
\newcounter{example}[section]
\newenvironment{example}[1][]
{\refstepcounter{example}\par\noindent\textbf{Example~\theexample.}\\}{\medskip}
\begin{document}

	\title{Higher dimensional Shimura varieties in the Prym loci of  ramified double covers}

	 \author[P. Frediani]{Paola Frediani} \address{ Dipartimento di
	Matematica, Universit\`a di Pavia, via Ferrata 5, I-27100 Pavia,
	 Italy } \email{{\tt paola.frediani@unipv.it}}
	
	 \author[G.P. Grosselli]{Gian Paolo Grosselli} \address{Dipartimento di
	 Matematica, Universit\`a di Pavia, via Ferrata 5, I-27100, Pavia,
	  Italy } \email{{\tt g.grosselli@campus.unimib.it}}
	
	 \author[A. Mohajer]{Abolfazl Mohajer} \address{Johannes Gutenberg Universit\"at Mainz, Institut f\"ur Mathematik, Staudingerweg 9, 55099 Mainz, Germany } \email{{\tt mohajer@uni-mainz.de}}

	\thanks{The first two authors are members of GNSAGA of INdAM.
The first two authors were partially supported by national MIUR funds,
PRIN  2017 Moduli and Lie theory,  and by MIUR: Dipartimenti di Eccellenza Program
   (2018-2022) - Dept. of Math. Univ. of Pavia.
    } \subjclass[2010]{14H10;14H15;14H40;14K12.}

	\begin{abstract}
	In this paper we construct Shimura subvarieties of dimension bigger than one of the moduli space $\A^\delta_{p}$ of polarised abelian varieties of dimension $p$, which are generically contained in the Pym loci of (ramified) double covers. The idea is to adapt the techniques already used to construct Shimura curves in the Prym loci to the higher dimensional case,  namely to use families of Galois covers of $\P^1$. The case of abelian covers is treated in detail, since in this case it is possible to make explicit computations that allow to verify a sufficient condition for such a family to yield a Shimura subvariety of $\A^\delta_{p}$. 
	
	\end{abstract}


	\maketitle

	\section{Introduction}
	The purpose of this paper is to construct Shimura subvarieties of dimension bigger than one of the moduli space of polarised abelian varieties of a given dimension which are generically contained in the Pym loci of (ramified) double covers. The technique is similar to the one used in \cite{cfgp} and \cite{fg}, where Shimura subvarieties of dimension one generically contained in the Prym loci were constructed using Galois covers of ${\mathbb P}^1$. 
	
 Let $ { \rgb}$ be the moduli space of isomorphism classes of triples $[(C, \eta, B)]$ where $C$ is a smooth complex projective curve of genus $g$, $B$ is a reduced effective divisor of degree $b$ on $C$ and $\eta$ is a line bundle on $C$ such that $\eta^2={\mathcal O}_{C}(B)$. This determines a double cover of $C$, $f:\tC\to C$ branched on $B$, with $\tC=\Spec({\mathcal O}_{C}\oplus \eta^{-1})$.
	
	The Prym variety associated to $[(C,\eta, B)]$ is the connected component containing the origin of the kernel of the norm map $\Nm_{f}: J\tC \to JC$.
	If $b>0$, then $\ker \Nm_{f}$ is connected.  It is a  polarised abelian variety of dimension $g-1+\frac{b}{2}$, denoted by $P(C,\eta,B)$ or equivalently by $P(\tC, C)$. 
	
	Denote by $\Xi$ the restriction on $P(\tC,C)$ of the principal polarisation on $J\tC$.
	For $b>0$ the polarisation $\Xi$ is of type $\delta=(1,\dots, 1, \underbrace{2,\dots,2}_{g\text{ times}})$.
	If $b=0$ or $2$ then $\Xi$ is twice a principal polarisation and  $P(\tC, C)$ is endowed with this principal polarisation. Let $\A^\delta_{g-1+\frac{b}{2}}$  be the moduli space of abelian varieties of dimension $g-1+\frac{b}{2} $ with a polarisation of type $\delta$.

	The Prym map $ \pgb: \rgb \lra \A^\delta_{g-1+\frac{b}{2}}$ is the map that associates to a point $[(C,\eta,B)]$ the polarised abelian variety $[(P(C,\eta,B), \Xi)].$		The Prym locus is the closure in $\A^\delta_{g-1+\frac{b}{2}}$ of the image of the map $\pgb$. 
	The map  $\pgb$ is generically finite, if and only if $\dim \rgb \leq \dim {\A}^{\delta}_{g-1+\frac{b}{2}},$
	and this holds if either $b\geq 6$ and $g\geq 1$, or $b=4$ and $g\geq 3$, $b=2$ and $g\geq 5$,  $b=0$ and $g \geq 6$.
	
	If $b =0$  the Prym map is generically injective for $g \geq 7$  (\cite{friedman-smith}, \cite{kanev-global-Torelli}). 
	If $b>0$, in \cite{pietro-vale},  \cite{mn}, \cite{no}, it is  proved the generic injectivity in all the 
	cases except for $b=4$, $g =3$, which was previously studied in \cite{nagarama}, \cite{bcv} and for which 
	the degree of the Prym map is $3$. 
	Recently, a global Prym-Torelli theorem was proved for all $g$ and $b\ge 6$ (\cite{ikeda} for $g=1$ and \cite{no1} for all $g$).
	
In \cite{cfgp} a question about the existence of Shimura subvarieties of $ \A^\delta_{g-1+\frac{b}{2}}$ generically contained in the Prym loci in the cases $b=0,2$ was posed. The expectation is that, similarly to what is expected in the case of the Torelli map, for $g$ sufficiently high, they should not exist. Nevertheless in \cite{cfgp} in the case $b=0,2$, and in \cite{fg} for all $b \geq 0$, examples of Shimura curves generically contained in the Prym loci were exhibited, using Galois covers of the projective line. 

Here we adapt this technique to investigate the existence of higher dimensional Shimura subvarieties in the (possibly ramified) Prym loci and we find several examples. Namely, we consider a family of Galois covers $\tC_t \ra \tC_t/\tG \cong \P^1$, where $\tC_t$ is a smooth projective curve of genus $\tg$, $\tG$ is a group admitting a central involution $\sigma \in \tG$, so that we have a factorisation 
$$\begin{tikzcd}[row sep=tiny]
	\tC_t \arrow{rr}{f_ t} \arrow{rd} & &  \arrow{ld } C_t   = \tC_t /\sx \sigma\xs  \\
	& \P^1 \cong \tC_t/\tG \cong C_t /G&
	\end{tikzcd}$$

	where $G = \tG/\sx \sigma\xs$. 
	
	To such a family of Galois covers we associate the Prym variety $P(\tC_t, C_t)$ of the double cover $f_t: \tC_t \ra C_t$ and we ask whether this family of Pryms $P(\tC_t, C_t)$ yields a Shimura subvariety of  $\A^\delta_{g-1+\frac{b}{2}}$. Here $g$ denotes the genus of $C_t$ and $b$ is the number of ramification points of the map $f_t$. 
	
	In \cite[Theorem~3.2]{fg} (see Theorem \ref{teo1ram}) a sufficient condition is given for a family of abelian covers as above to yield a Shimura subvariety of $\A^\delta_{g-1+\frac{b}{2}}$. This is called condition \eqref{condB} in section 3 and it is the natural generalisation of the analogous sufficient condition used in \cite{fgp}, \cite{M10}, \cite{moonen-oort} to determine Shimura subvarieties generically contained in the Torelli locus. Let us briefly explain  \eqref{condB}. Denote by ${V_t}_{-}$  subspace of $V_t:= H^0(\tC_t, \omega_{\tC_t})$ which is anti-invariant under the action of the involution $\sigma$. Set  $ W_t: = H^0( \tC , \omega^2_{\tC_t} )$ and denote by ${W_t}_+ \subset W_t$ the invariant subspace under the action of $\sigma$.  Consider the multiplication map $ S^2 H^0(\tC, \omega_{\tC_t}) \ra H^0( \tC, \omega_{\tC_t}^2)$. Denote by $m: (S^2{V_t}_{-})^{\tG} \ra {W_t}^{\tG}_+$ the restriction of the multiplication map to the space  $(S^2{V_t}_{-})^{\tG}$ of symmetric tensors in $S^2{V_t}_-$, which are $\tG$-invariant.  Condition \eqref{condB} says that the map $m$ is an isomorphism. 
	
	The same condition was considered in \cite{cfgp} and \cite{fg}, where many examples of one-dimensional families of covers satisfying condition \eqref{condB} where given, thus producing Shimura curves in the Prym loci.  The computations are done with \MAGMA. Condition \eqref{condB} clearly implies that $\dim (S^2{V_t}_{-})^{\tG} = \dim  {W_t}^{\tG}_+$ (condition \eqref{condA} in section 3) and the \MAGMA\ script first verifies this equality. In the case of one dimensional families of covers, once condition \eqref{condA} is verified, condition \eqref{condB} is equivalent to say  that the map $m$ should be non zero. This is clearly easier to verify than to prove that $m$ is an isomorphism when $\dim (S^2{V_t}_{-})^{\tG} = \dim  {W_t}^{\tG}_+ >1$. By the global Prym Torelli theorem proved in \cite{no1}, \cite{ikeda}, if $b \geq 6$, conditions \eqref{condA} and \eqref{condB} are equivalent. This is no longer true in the cases $b \leq 4$. 
	
	Nevertheless also in these cases we are able to construct many examples satisfying condition \eqref{condB}. One technique is to verify  another condition that we denoted by \eqref{condB1} that implies condition \eqref{condB}, and that can be verified analysing the representation of $\tG$ on ${V_t}_-$ (see section 3 for a precise statement). 
	
	This is done by the \MAGMA\ script. 
	
	A class of examples were we are able to verify condition \eqref{condB} is the case of families of covers with abelian Galois group $\tG$. In fact if $\tG$ is abelian, we are able to exhibit a basis for the space  $(S^2{V_t}_{-})^{\tG}$ and to compute the rank of the multiplication map $m$. Thus we have an explicit way to verify condition \eqref{condB}. This is explained in section 4, and it is an application of results on abelian covers obtained in \cite{MZ}, \cite{moh}, \cite{W}.   We apply this technique in several examples, some of which are described in section 5. 
	
	In the case of families of non-abelian covers that do not satisfy condition \eqref{condB1}, we need to adopt ad-hoc techniques, one of which is to use Theorem 3.8 of \cite{fgs} (see e.g. Example 4 in section 5). This result is also used to give an example satisfying  \eqref{condA} but not  \eqref{condB} an showing that in fact this family of covers yields a subvariety of ${\mathsf{A}}_4^{\delta}$ that is not totally geodesic, hence it is not Shimura (Example 5 in section 5). 
	
	All the examples that we found satisfying condition \eqref{condB} are listed in the table in the Appendix. Here we summarise the results: 
	
	\begin{THEOR}
	There are 93 families of Galois covers yielding Shimura subvarieties of  $\A^\delta_{p}$ of dimension $n:=r-3$, $2 \leq n \leq 6$, generically contained in the (possibly ramified) Prym loci. The highest value of $p$ is 16, obtained by a 2-dimensional familiy with $\tg=29$, $g=13$, $b=8$ and $\tG = Q_8 \circ D_8$. There are 6 families of dimension 6 which yield Shimura subvarieties either of $\A^\delta_{4}$ (with $\tg =6$, $g=2$, $b=6$, $\tG = (\Z/2\Z)^2$), or of $\A_{5}$ (with $\tg =11$, $g=6$, $b=0$, $\tG = (\Z/2\Z)^3$). These are all listed in the table in the Appendix and 92 of them satisfy condition \eqref{condB}. The family described in Example 4 is not in the table, since it is shown to yield a 2-dimensional Shimura subvariety of $\A_{12}$ by a different method and it has $\tg = 25$, $g =13$, $b=0$. 
	\end{THEOR}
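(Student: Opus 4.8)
The plan is to prove the statement by exhibiting the $93$ families explicitly --- these are the entries of the table in the Appendix --- and by verifying for each of them the sufficient condition of Theorem~\ref{teo1ram}. Each such family $\tC_t \ra \tC_t/\tG \cong \P^1$ is specified by a finite datum: the group $\tG$, the central involution $\sigma$, the number $r$ of branch points on $\P^1$, and the monodromy (an $r$-tuple of generators of $\tG$ with product $1$, lying in prescribed conjugacy classes). From this datum one computes, via Riemann--Hurwitz for $\tC_t \ra \P^1$ and for $f_t\colon \tC_t \ra C_t = \tC_t/\sx\sigma\xs$, the genera $\tg$ and $g$, the number $b$ of branch points of $f_t$, and one checks that the Prym map is generically finite on the locus swept out; the dimension of the resulting subvariety of $\A^\delta_{g-1+b/2}$ is $n = r-3$, which ranges between $2$ and $6$ over the table. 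All of this bookkeeping, and the verifications below, are implemented in \MAGMA.

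The core of the proof is, for each family, the verification of condition \eqref{condB}: that the multiplication map $m\colon (S^2{V_t}_-)^{\tG} \ra {W_t}^{\tG}_+$ is an isomorphism. I would split this into three cases. When $\tG$ is abelian, I use the explicit basis of $(S^2{V_t}_-)^{\tG}$ and the explicit formula for $m$ derived in section~4 from the study of abelian covers in \cite{MZ}, \cite{moh}, \cite{W}; then \eqref{condB} reduces to checking that an explicit matrix has full rank, which \MAGMA\ performs. When $\tG$ is non-abelian, I first test the purely representation-theoretic condition \eqref{condB1} on the $\tG$-module ${V_t}_-$, which implies \eqref{condB}; this disposes of all the non-abelian families but one. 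For that single remaining non-abelian entry, condition \eqref{condB} cannot be checked in this way, and I would instead invoke the ad hoc argument of section~5 based on \cite[Theorem~3.8]{fgs} to conclude directly that its image is totally geodesic, hence Shimura. This accounts for all $93$ table entries, $92$ of which satisfy \eqref{condB}. Separately, the family of Example~4 --- which is not listed in the table --- is shown in section~5, again via \cite[Theorem~3.8]{fgs}, to yield a $2$-dimensional totally geodesic (hence Shimura) subvariety of $\A_{12}$, and its invariants $\tg = 25$, $g = 13$, $b = 0$ are computed there.

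The remaining assertions are read off from the finished table. The quantity $p = g - 1 + b/2$ attains its maximum value $16$ on exactly one entry, the $2$-dimensional family with $\tg = 29$, $g = 13$, $b = 8$, $\tG = Q_8 \circ D_8$ (indeed $p = 13 - 1 + 4 = 16$). The entries with $n = r - 3 = 6$ are six in number: those landing in $\A^\delta_4$ have $\tg = 6$, $g = 2$, $b = 6$, $\tG = (\Z/2\Z)^2$ (so $p = 2 - 1 + 3 = 4$), and those landing in $\A_5$ have $\tg = 11$, $g = 6$, $b = 0$, $\tG = (\Z/2\Z)^3$ (so $p = 6 - 1 = 5$).

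The step I expect to be the main obstacle is the verification of \eqref{condB} --- rather than of the weaker numerical equality \eqref{condA} --- in the range $b \le 4$. For $b \ge 6$ the global Prym--Torelli theorem of \cite{no1} and \cite{ikeda} makes \eqref{condA} and \eqref{condB} equivalent, so there the rank of $m$ is forced once source and target have equal dimension; but for $b \le 4$ the two conditions genuinely differ, as Example~5 shows by exhibiting a family satisfying \eqref{condA} but not \eqref{condB} whose image is not even totally geodesic. So the real content is to certify that $m$ is surjective (equivalently injective), not merely that its source and target have equal dimension --- which is precisely what the explicit section~4 computation in the abelian case, condition \eqref{condB1} in the non-abelian case, and the \cite{fgs}-based arguments of section~5 for the few exceptions are designed to accomplish.
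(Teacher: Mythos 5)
Your proposal follows essentially the same route as the paper: enumerate the families by their Prym data, verify condition \eqref{condB} via Theorem~\ref{teo1ram} (using \eqref{condB1}, the explicit section~4 computation for abelian $\tG$, and the equivalence of \eqref{condA} and \eqref{condB} when $b\ge 6$ coming from the global Prym--Torelli theorem of \cite{no1}, \cite{ikeda}), and treat Example~4 separately via \cite[Thm.~3.8]{fgs}. Two small corrections to your case analysis, though. First, it is not \eqref{condB1} that ``disposes of all the non-abelian families but one'': as the table shows, \eqref{condB1} fails for essentially all the non-abelian entries, and what actually certifies \eqref{condB} for them is that every non-abelian family in the table has $b\ge 6$, so \eqref{condA} already forces \eqref{condB}; the criterion \eqref{condB1} is instead used for a number of abelian entries (alongside the direct rank computation of section~4 for the abelian cases with $b\le 4$ where neither \eqref{condB1} nor $b\ge6$ applies). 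You do have the $b\ge 6$ mechanism in hand, so your plan still goes through, but the bookkeeping of which tool handles which entry is off. Second, your count is internally inconsistent: there are $92$ table entries (all satisfying \eqref{condB}), and the $93$rd family is Example~4, which is \emph{not} a table entry; it should not simultaneously appear as ``the single remaining non-abelian entry'' of the table and as a separate family outside it.
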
 
		
Moreover for $r=8,9$ the families satisfying condition  \eqref{condB} for $\tg \leq 20$ and $\tG$ abelian are exactly the ones listed in the Table. For $r=7$ we have examples of families  satisfying condition  \eqref{condB} with $b=0,2,4,6,8$; for $r=6$ with $b=0,2,4,6,8, 16$; for $r=5$ with $b=0,2,4,6,8, 12$.

The computations done indicate that for high values of $\tg$ there might not exist such examples of Shimura subvarieties. This is coherent with the computations done in \cite{cfgp}, \cite{fg} and also with the estimates on the dimension of a germ of a totally geodesic submanifold of $\A^\delta_{p}$ contained in the Prym loci found in \cite{cf1}, \cite{cf2}, \cite{cf3}. 

The structure of the paper is as follows: In section 2 we recall the basic definitions and properties of Prym varieties and Prym maps and of special (or Shimura) subvarieties of PEL type. 
In section 3 we describe the construction of the families of Galois covers yielding Shimura subvarieties of  $\A^\delta_{p}$ contained in the Prym loci, and we explain conditions  \eqref{condA}, \eqref{condB} and \eqref{condB1}. In section 4 we concentrate on the case of families abelian covers, we explain their construction following \cite{MZ}, \cite{moh}, \cite{W} and we describe a basis of the space of holomorphic one forms. This will be crucial to make explicit computations on the examples. 
In section 5 we describe some examples of families of covers satisfying conditions \eqref{condA}. The first one has abelian Galois group $\tG$ and satisfies  \eqref{condB1} (hence \eqref{condB}).  The second and the third one have abelian Galois group, they do not satisfy  \eqref{condB1}, and we explicitly show that \eqref{condB} holds using the techniques explained in Section 4. 
	Example 4 has non abelian Galois group and conditon  \eqref{condB1} is not satisfied. In this case we prove that it yields a  Shimura subvariety of  $\A_{12}$ of dimension 2 using \cite [Thm. 3.8]{fgs} and a geometric description of the family. 
	 Finally, in Example 5 we show that condition \eqref{condA} is satisfied, but condition \eqref{condB} is not and we show that it yields a variety which is not totally geodesic, hence it is not Shimura. Notice that in the case of the Torelli map conditions \eqref{condA} and \eqref{condB} are equivalent (see condition $(*)$ in \cite{fgp}), but this does not hold for Prym maps, as it is shown by this example. 
The Appendix contains the table where all the examples found are listed, together with the link to the  \MAGMA\ script.


	\section{Preliminaries on Prym varieties and on special subvarieties of $\ag$}
	\label{Shimura-section}

	\subsection{Prym varieties}
	Denote by $ { \rgb}$ the moduli space of isomorphism classes of triples $[(C, \eta, B)]$ where $C$ is a smooth complex projective curve of genus $g$, $B$ is a reduced effective divisor of degree $b$ on $C$ and $\eta$ is a line bundle on $C$ such that $\eta^2={\mathcal O}_{C}(B)$. This determines a double cover of $C$, $f:\tC\to C$ branched on $B$, with $\tC=\Spec({\mathcal O}_{C}\oplus \eta^{-1})$.
	
	The Prym variety associated to $[(C,\eta, B)]$ is the connected component containing the origin of the kernel of the norm map $\Nm_{f}: J\tC \to JC$.
	If $b>0$, then $\ker \Nm_{f}$ is connected.  It is a  polarised abelian variety of dimension $g-1+\frac{b}{2}$, denoted by $P(C,\eta,B)$ or equivalently by $P(\tC, C)$. 
	
	Denote by $\Xi$ the restriction on $P(\tC,C)$ of the principal polarisation on $J\tC$.
	For $b>0$ the polarisation $\Xi$ is of type $\delta=(1,\dots, 1, \underbrace{2,\dots,2}_{g\text{ times}})$.
	If $b=0$ or $2$ then $\Xi$ is twice a principal polarisation and  $P(\tC, C)$ is endowed with this principal polarisation. Denote by $\A^\delta_{g-1+\frac{b}{2}}$  the moduli space of abelian varieties of dimension $g-1+\frac{b}{2} $ with a polarization of type $\delta$.

	The Prym map $ \pgb$ is defined as follows: 
	\[ \pgb: \rgb \lra \A^\delta_{g-1+\frac{b}{2}}, \quad [(C,\eta,B)] \longmapsto  [(P(C,\eta,B), \Xi)].\]
	
	The Prym locus is the closure in $\A^\delta_{g-1+\frac{b}{2}}$ of the image of the map $\pgb$. 
	
	The dual of the differential of the Prym map $\pgb$ at a generic point $[(C, \eta, B)]$  is given by the multiplication map
	\begin{equation}
	\label{dp}
	(d\pgb)^* : S^2H^0(C, {\omega}_C \otimes \eta) \to H^0(C, \omega^2_C\otimes\mathcal O_C(B))
	\end{equation}
	
	which is known to be surjective as soon as $\dim \rgb \leq \dim {\A}^{\delta}_{g-1+\frac{b}{2}},$ (\cite{lange-ortega}).  Hence $\pgb$ is generically finite, if and only if $\dim \rgb \leq \dim {\A}^{\delta}_{g-1+\frac{b}{2}},$
	and this holds if either $b\geq 6$ and $g\geq 1$, or $b=4$ and $g\geq 3$, $b=2$ and $g\geq 5$,  $b=0$ and $g \geq 6$.
	
	If $b =0$  the Prym map is generically injective for $g \geq 7$  (\cite{friedman-smith}, \cite{kanev-global-Torelli}). 
	If $b>0$, in \cite{pietro-vale},  \cite{mn}, \cite{no}, it is  proved the generic injectivity in all the 
	cases except for $b=4$, $g =3$, which was previously studied in \cite{nagarama}, \cite{bcv} and for which 
	the degree of the Prym map is $3$. 
	Recently, a global Prym-Torelli theorem was proved for all $g$ and $b\ge 6$ (\cite{ikeda} for $g=1$ and \cite{no1} for all $g$).

	\subsection{Special varieties of PEL type}
	\label{VHS}
	Consider a rank $2g$ lattice $\Lambda \cong \Z^{2g}$ and an alternating form $Q : \Lambda \times \Lambda \to \Z $ of type $\delta = (1\dots,1,2,\dots,2)$. 
	There exists a basis of $\Lambda$ such that the corresponding matrix is
	$\begin{pmatrix}
	0 & \Delta_g\\
	-\Delta_g & 0
	\end{pmatrix},$
	where $\Delta_g$ is the diagonal matrix whose entries are $\delta=(1,\dots,1,2,\dots,2)$. 
	Set $U := \Lambda \otimes \R$.  
	The Siegel space $\sieg(U,Q)$ is defined as follows
	\[
	\sieg(U,Q) := \{J \in \GL (U) : J^2 = - I, J^* Q = Q, Q(x,Jx) >0,\ \forall x \neq 0 \}.
	\]
	The symplectic group $\Sp(\Lambda,Q)$ of the form $Q$ acts on the Siegel space $\sieg(U,Q)$ by conjugation and this action is properly discontinuous. 
	The moduli space of abelian varieties of dimension $g$ with a polarisation of type $\delta$ is the quotient 
	$\agd = \Sp(\Lambda,Q) \backslash \sieg(U,Q) $.  
	It is a complex analytic orbifold.	We will consider $\ag^{\delta}$ with the orbifold structure.  
	To an element $J \in \sieg(U,Q)$ we associate the real torus $U/\Lambda \cong  \R^{2g} / \Z^{2g}$ endowed with the complex structure $J$ and the polarization $Q$. This is a polarised abelian variety $A_J$. 
	On $\sieg(U,Q)$ there is a natural variation of rational Hodge structure, whose  local system  is $\sieg(U,Q) \times \Q^{2g}$, and that 
	corresponds to the Hodge decomposition of $\C^{2g}$ in $\pm\i$ eigenspaces for $J$.  
	This gives a variation of Hodge structure on $\agd$ in the orbifold sense.

	A special or Shimura subvariety $\zg \subseteq\agd$ is by definition a Hodge locus of this variation of Hodge structure on $\agd$ (see \cite[ \S 2.3]{moonen-oort} for the definition of Hodge loci). 
	Special subvarieties  are totally geodesic  and they contain a dense set of CM points  \cite[\S 3.4(b)]{moonen-oort}. 
	Conversely an algebraic totally geodesic subvariety that contains a CM point is a special subvariety (\cite{mumford-Shimura}, 
	\cite[Thm. 4.3]{moonen-linearity-1}). 
	
	Let us now recall the definition of  special subvarieties of PEL type (\cite[\S 3.9]{moonen-oort}). 
	Given $J\in \sieg(U,Q)$, set  $\End_\Q (A_{J}) := \{f\in \End( \Q^{2g}): Jf=fJ\}.$
	Fix a point $J_0 \in \sieg(U,Q)$ and consider $D:= \End_\Q (A_{J_0})$.  
	The \emph{PEL type} special subvariety $\zg (D)$ is defined as the image in $\agd$ of the connected component of the set
	$\{J \in \sieg(U,Q): D \subseteq\End_\Q(A_J)\}$ that contains $J_0$.  
	By definition $\zg(D)$ is irreducible.

	If $G\subseteq\Sp(\Lambda, Q)$ is a finite subgroup, denote by $\sieg(U,Q)^G$ the subset of $\sieg(U,Q)$ of fixed points by the action of $G$. 
	Define
	\begin{equation}
	D_G:=\{ f\in \End_\Q (\Lambda \otimes \Q) : Jf=fJ, \ \forall J \in \sieg(U,Q)^G\}.
	\end{equation}
	We have the following result proven in \cite[\S 3]{fgp}. 
	\begin{THEOR}
		\label{bert}
		The subset $\sieg(U,Q)^G$ is a connected complex submanifold of $\sieg(U,Q)$.
		The image of $\sieg(U,Q)^G$ in $\agd$ coincides with the PEL subvariety
		$\zg (D_G)$.  If $J \in \sieg(U,Q)^G $, then
		$ \dim \zg(D_G) = \dim (S^2 \R^{2g})^G$ where $\R^{2g}$ is endowed
		with the complex structure $J$.
	\end{THEOR}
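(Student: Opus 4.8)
\emph{Overview.} The plan is to recognise $\sieg(U,Q)^G$ as the fixed locus of the finite group $G$ acting by holomorphic isometries on the Hermitian symmetric domain $\sieg(U,Q)$, to read off its structure and dimension from general facts about such fixed loci, and then to match it with the PEL description $\zg(D_G)$ by a purely algebraic bookkeeping of commutants. Endow $\sieg(U,Q)$ with its $\Sp(U,Q)$-invariant metric, under which it is a complete simply connected Kähler manifold of non-positive sectional curvature and $\Sp(U,Q)$ acts by holomorphic isometries via $J\mapsto gJg^{-1}$, with $G\subseteq\Sp(\Lambda,Q)\subseteq\Sp(U,Q)$ finite. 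First I would note that $\sieg(U,Q)^G\neq\emptyset$: the stabiliser of any point of $\sieg(U,Q)$ is a maximal compact subgroup of $\Sp(U,Q)$, and every finite subgroup lies in such a maximal compact, so $G$ fixes some $J_0$ (equivalently, apply the Cartan fixed point theorem to a finite orbit). Connectedness then follows because for any $p,q\in\sieg(U,Q)^G$ the unique geodesic joining them is carried by each $g\in G$ to a geodesic with the same endpoints, hence is fixed pointwise; thus $\sieg(U,Q)^G$ is geodesically convex, in particular connected. For the complex submanifold structure, near any fixed point Bochner linearisation provides $G$-equivariant holomorphic coordinates in which $G$ acts linearly, so locally $\sieg(U,Q)^G$ is the $\C$-linear fixed subspace of that action; in particular $T_J(\sieg(U,Q)^G)=(T_J\sieg(U,Q))^G$ for every $J\in\sieg(U,Q)^G$.

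\emph{Tangent space and dimension.} Differentiating the defining relations $J^2=-I$ and $Q(Ju,Jv)=Q(u,v)$ along curves in $\sieg(U,Q)$ identifies $T_J\sieg(U,Q)$ with the space of endomorphisms $X$ of $U$ that anticommute with $J$ and for which $Q(XJu,v)$ is symmetric in $u,v$; the assignment $X\mapsto((u,v)\mapsto Q(XJu,v))$ is then an $\Sp(U,Q)$-equivariant $\C$-linear isomorphism of $T_J\sieg(U,Q)$ with $S^2$ of $\R^{2g}$ equipped with the complex structure $J$ (the positivity condition is open and does not enter the tangent space). When $J\in\sieg(U,Q)^G$ the group $G$ commutes with $J$, hence acts $\C$-linearly on $(\R^{2g},J)$, and the isomorphism is $G$-equivariant; combined with the previous paragraph this yields $T_J(\sieg(U,Q)^G)\cong(S^2\R^{2g})^G$. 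Since $\Sp(\Lambda,Q)$ is discrete and acts properly discontinuously, the projection $\sieg(U,Q)\to\agd$ has discrete fibres, so the image of $\sieg(U,Q)^G$ has the same dimension, and we conclude $\dim\zg(D_G)=\dim(S^2\R^{2g})^G$ once the image is identified with $\zg(D_G)$.

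\emph{PEL description.} I would prove the set-theoretic equality
\[
\{\,J\in\sieg(U,Q): D_G\subseteq\End_\Q(A_J)\,\}=\sieg(U,Q)^G .
\]
The inclusion ``$\supseteq$'' is immediate from the definition of $D_G$, all of whose elements commute with every $J\in\sieg(U,Q)^G$. For ``$\subseteq$'', observe that every $g\in G$ commutes with every $J\in\sieg(U,Q)^G$ — that is what being fixed means — so the subalgebra $\Q[G]\subseteq\End_\Q(\Lambda\otimes\Q)$ is contained in $D_G$; hence $D_G\subseteq\End_\Q(A_J)$ forces $\Q[G]\subseteq\End_\Q(A_J)$, i.e. $J$ commutes with $G$, i.e. $J\in\sieg(U,Q)^G$. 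For $J_0$ a generic point of $\sieg(U,Q)^G$ one has $D_G=\End_\Q(A_{J_0})$ — the algebra $\End_\Q(A_{J_0})$ is minimal along the connected set $\sieg(U,Q)^G$, hence contained in every $\End_\Q(A_J)$ for $J\in\sieg(U,Q)^G$ and so contained in their intersection $D_G$, while the reverse inclusion holds by definition — so $\zg(D_G)$ is defined, and since the displayed set is connected it coincides with its connected component through $J_0$, whose image in $\agd$ is by definition $\zg(D_G)$. Together with the dimension computation this establishes all three assertions.

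\emph{Main obstacle.} The one genuinely delicate point is the tangent space computation: fixing the precise $\Sp(U,Q)$-equivariant identification $T_J\sieg(U,Q)\cong S^2(\R^{2g},J)$ with the correct complex structure and checking equivariance, together with the appeal to Bochner linearisation that is needed to know that the tangent space of the fixed locus equals the invariant part of the tangent space. The remaining ingredients — the Cartan fixed point theorem, convexity of isometry-fixed loci in non-positively curved simply connected manifolds, and the commutant bookkeeping with $\Q[G]\subseteq D_G$ — are standard.
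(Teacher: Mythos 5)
The paper does not prove Theorem \ref{bert} itself but cites \cite[\S 3]{fgp}, and your argument is essentially the one given there: Cartan's fixed point theorem and geodesic convexity for non-emptiness and connectedness of $\sieg(U,Q)^G$, the identification $T_J\sieg(U,Q)\cong S^2(\R^{2g},J)$ for the dimension count, and the inclusion $\Q[G]\subseteq D_G$ to match the fixed locus with the PEL locus. The one step you should tighten is the claim that $D_G=\End_\Q(A_{J_0})$ for generic $J_0$: ``minimal along a connected set, hence contained in every $\End_\Q(A_J)$'' is not a valid inference as stated (minimal elements need not be lower bounds); the correct argument is that for each $f\in\End_\Q(\Lambda\otimes\Q)\setminus D_G$ the locus $\{J\in\sieg(U,Q)^G: fJ=Jf\}$ is a proper analytic subset, and since $\End_\Q(\Lambda\otimes\Q)$ is countable these loci cannot cover the connected manifold $\sieg(U,Q)^G$, so a very general $J_0$ has $\End_\Q(A_{J_0})=D_G$ exactly. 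With that repair the proof is complete and agrees with the cited one.
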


	\section{Special subvarieties in the Prym loci}
	
	Let $\Sigma_g$ be a compact connected oriented surface of genus $g$. 
	We denote by $\T_g := \T(\Sigma_g)$  the Teichm\"uller space of $\Sigma_g$.   
	Denote by $\T_{0,r}$ the Teichm\"uller space in genus $0$  with $r\geq 4$ marked points
(see e.g.  \cite[Chap.\ 15]{acg2}).  Fix $p_0,\dots,p_r\in S^2$ distinct points, denote by  $P:=(p_1,\dots,p_r)$. A point of $\T_{0,r}$ is an equivalence class of triples $(\P^1, x, [h])$ where $x = (x_1, \dots, x_r) $ is an $r$-tuple of distinct points in $\P^1$ and $[h]$ is an isotopy 
	class of orientation preserving homeomorphisms $h : (\P^1, x) \to (S^2 , P)$. Two such triples $(\P^1, x , [h])$, $(\P^1, x', [h'])$ are equivalent if there is a biholomorphism
	$\phi: \P^1 \to \P^1$ such that $\phi(x_i) = x'_i$ for any $i$ and $[h] = [h'\circ \phi ]$. 

	Choosing the  point $p_0 \in S^2 - P$ as base point we have  an isomorphism 
	\begin{equation}
	\label{iso}
	\Gamma_r := \sx \gamma_1, \dots, \gamma_r | \gamma_1 \cdots \gamma_r =1\xs \cong \pi_1(S^2 - P, p_0).
	\end{equation}
	
	Take a point  $t=[(\P^1,x,[h])]\in \T_{0,r}$, and fix  an epimorphism $\theta:\Gamma_r\to G$.
	By Riemann's existence theorem, this gives a Galois cover $C_t\to\P^1=C_t/G$ ramified over $x$ whose monodromy is given by $\theta$,
	which is endowed with an isotopy class of homeomorphisms $[h_t]$ on a surface $\Sigma_g$ covering $S^2$.
	
	So we get a map  $\T_{0,r}\to \T_g:t\mapsto [(C_t,[h_t])]$ and the group $G$ acts on $C_t$ and embeds in the mapping class group $\Mod_g$ of $\Sigma_g$. 
Denote by $G_\theta$ the image of $G$ in $\Mod_g$.
	The image of the map  $\T_{0,r}\to \T_g$ is the subset  $\T_g^{G_\theta}$ of  $\T_g$ given  of the fixed points by the action of $G_\theta$ (\cite{gavino}).
Nielsen realization theorem says that $\T_g^{G_\theta}$ is a complex submanifold  of $\T_g$ of dimension $r-3$.
	
	The image of $ \T^{G_\theta}$ in the moduli space $\mathsf{M}_g$ is a $(r-3)$-dimensional
	algebraic subvariety (see e.g. \cite{gavino,baffo-linceo,clp2} and \cite [Thm. 2.1]{broughton-equi}).

	We denote this image by $\M_g(G, \theta)$. To determine $\M_g(G, \theta)$  we have chosen the isomorphism \eqref{iso}
	$\Gamma_r \cong \pi_1(S^2 -P, p_0)$. To get rid of this choice
 we consider the orbits of the $\braid \times \Aut(G)$--action, called \emph{Hurwitz equivalence classes}, here $\braid$ is the braid group  (see e.g. \cite{baffo-linceo}, \cite{cfgp}). 

	Data in the same orbit give rise to the same subvariety of $\M_g$,
	hence the subvariety $\M_g(G, \theta)$ is well-defined.  
	For more details see \cite{penegini2013surfaces,baffo-linceo,birman-braids}.

	\begin{DEF}
		\label{prymdatum}
		A  Prym datum of type $(r,b)$ is a triple  $\Xi=(\tG,\ttheta,\sigma)$, 
		where $\tG$ is a finite group, $\ttheta:\Gamma_r\to\tG$ is an  epimorphism and  $\sigma\in\tG$ is a central involution and 
		$$b=\sum_{i:\sigma\in\sx{\ttheta(\gamma_i)}\xs}\frac{|\tG|}{\operatorname{ord}(\ttheta(\gamma_i))}.$$
	\end{DEF}
	
	Let us fix a Prym datum $\Xi=(\tG,\ttheta,\sigma)$, and set  $G=\tG/\sx \sigma \xs$.  
	The composition of $\ttheta$ with the projection $\tG \to G$ is an epimorphism $\theta:\Gamma_r\to G$. 
	To  a point $t\in \T_{0,r}$ we can associate  two Galois covers  $\tC_t \to \P^1 \cong \tC_t/\tG$ and $C_t \to \P^1 \cong C_t/G$. 
	Denote by $\tg$ the genus of $\tC_t$ and by $g$ the genus of $C_t$. We have a diagram 
	\begin{equation}
	\label{tc-c}
	\begin{tikzcd}[row sep=tiny]
	\tC_t \arrow{rr}{f_ t} \arrow{rd} & &  \arrow{ld } C_t   = \tC_t /\sx \sigma\xs  \\
	& \P^1 &
	\end{tikzcd}
	\end{equation}
	and by the definition of $b$,  we immediately see that  the double covering $f_t$  has $b$ branch points. 
	The covering $f_t$ is determined by its branch locus $B_t$ and by an element $\eta_t \in \Pic^{\frac{b}{2}}(C_t)$ such that $\eta_t^2 = \mathcal{O}_{C_t}(B_t)$.  Hence we have a map $\T_{0,r}\to\rgb$ which associates to $t$ the class $[(C_t,\eta_t,B_t)]$.
	This map depends on the datum $\Xi=(\tG,\ttheta,\sigma)$. 
	Denote by  $R(\Xi)$ its image. 
	The map $\T_{r,0} \to  R(\Xi)$ has discrete fibres, hence  $\dim R(\Xi)=r-3$.
	
	In \cite[p. 79]{gavino} it is shown that there is an intermediate variety 
	$\widetilde{\M}_{\tg}(\tG, \ttheta)$ such that the projection ${\T}_{\tg}^{\tG_{\tilde{\theta}}} \to  \M_{\tg}(\tG, \tilde{\theta})$  factors through ${\T}_{\tg}^{\tG_{\tilde{\theta}}} \to \widetilde{\M}_{\tg}(\tG, \tilde{\theta}) \to  {\M_{\tg}}(\tG, \tilde{\theta})$. The variety $\widetilde{\M}_{\tg}(\tG, \ttheta)$ is the normalisation of $\M_{\tg}(\tG, \ttheta)$ and it parametrises equivalence classes of curves with an action of $\tG$ of topological type determined by the datum $\Xi$.  Moreover there is a finite cover $ \X_{\tg}(\tG, \ttheta) \to \widetilde{\M}_{\tg}(\tG, \ttheta)$ and a universal family ${\mathcal C}(\tG, \ttheta) \to  \X_{\tg}(\tG, \ttheta)$  \cite[section 4]{gavino}. We also have a factorisation ${\T}_{\tg}^{\tG_{\tilde{\theta}}} \to  \X_{\tg}(\tG, \ttheta) \to \widetilde{\M}_{\tg}(\tG, \tilde{\theta}) \to  {\M_{\tg}}(\tG, \tilde{\theta})$. 

	So we have a map $ \X_{\tg}(\tG, \ttheta) \to R(\Xi)$, 
	which associates to the class $[\tC]$ of a curve with a fixed $\tG$-action the class of the cover $\tC \to C= \tC/ \sx \sigma \xs$, 
	where $\sigma \in \tG$ is the central involution fixed by the datum.  
	Clearly the map $ {\T}_{\tg}^{\tG_{\ttheta}}  \cong \T_{0,r}  \to R(\Xi)$  is the composition  $ {\T}_{\tg}^{\tG_{\tilde{\theta}}} \to \X_{\tg}(\tG, \ttheta) \to R(\Xi)$. 
	Moreover the Prym map $\pgb$ lifts to a map $\X_{\tg}(\tG, \ttheta) \to \A_{\tg-g}^{\delta}$, 
	which sends the class of a curve $[\tC]$ with an action of $\tG$ to $P(\tC, C)$.
	We still denote  this map by $\pgb: \X_{\tg}(\tG, \ttheta) \to \A_{\tg-g}^{\delta}$. 
	We have the following diagram
	
	\[
	\begin{tikzcd}[row sep=small]
	{\T}_g^{G_\theta}\ar[dd] & \T_{0,r}\ar[l,"\cong" swap]\ar[rr,"\cong"] \ar[dd]&[-5ex] &[-5ex]{\T}_{\tg}^{\tG_{\ttheta}}\ar[ld]\ar[dd]\\
	&&{\X}_{\tg}(\tG, \ttheta)\ar[ld]\ar[rd]\\
	\M_g(G , \theta) &R(\Xi)\lar\ar[rr]&&{\M}_{\tg}(\tG, \ttheta) \\ 
	\end{tikzcd}
	\]
	
	Given a Prym datum $\Xi=(\tG,\ttheta,\sigma)$, let us fix an element $\tC$ of the
	family ${\T}_{\tg}^{\tG_{\ttheta}}$ with the double covering
	$f: \tC \to C$. 
	Set
	\[
	V: = H^0(\tC, \omega_{\tC}),
	\]
	
	We have an action of $\sx \sigma \xs$ on $V$ and an eigenspace decomposition for this action: 
	\[ V: = H^0(\tC, \omega_{\tC})= V_+ \oplus V_-,\]
	where $V_+\cong H^0(C, \omega_C)$ and $V_-\cong H^0(C,\omega_C\otimes \eta)$ as $G$-modules.  
	Similarly  set $ W: = H^0( \tC , \omega^2_\tC )= W_+\oplus W_-$,
	$W_+ \cong H^0(C,\omega^2_C \otimes \eta^2) = H^0(C,\omega^2_C\otimes\mathcal O_C(B))$ and
	$W_- \cong H^0(C, \omega^2_C\otimes \eta)$.  
	
	The multiplication map $ m : S^2 V \to W $ is the dual of the differential of the Torelli map
	$\widetilde{j} : \M_\tg \to \A_\tg$ at the point $[\tC] \in \M_\tg$.  
	This map is $\tG$-equivariant.  We have the following isomorphisms
	\[
	(S^2V)^\tG = (S^2V_+)^{\tG} \oplus (S^2 V_-)^{\tG},  \quad
	W^\tG = W_+ ^{\tG}.
	\]
	
	Hence the multiplication map $m$ maps $(S^2V)^\tG $ to $W_+^\tG$.  
	
	Notice that the codifferential of the Prym map  at the point $[(C, \eta, B)]$  is the multiplication map in  \eqref{dp}, 
	which coincides with the restriction of the multiplication map $m$ to $S^2 V_-$. 
	Here we are  interested in the restriction of $m$ to $ (S^2 V_-)^\tG $ that for simplicity we still denote by $m$: 
	\begin{equation}
	\label{nostramram}
	m : (S^2 V_-)^{\tG} \lra W_+^{\tG}.
	\end{equation}
	By what we have said, this is the multiplication map
	\[
	(S^2 H^0 (C, \omega_C\otimes \eta))^G \lra H^0(C, \omega^2_C \otimes \eta^2)^G
	\cong H^0(C,\omega^2_C)^G \cong H^0(\tC,\omega^2_\tC)^{\tG}.
	\]
	
	Now we recall Theorem~3.2 in \cite{fg}, which is a generalisation of Theorems 3.2 and 4.2 in \cite{cfgp}.
	\begin{THEOR}
		\label{teo1ram}
		Let $\Xi=(\tG, \ttheta, \sigma)$ be a Prym datum. If for some $t\in \T_{0,r}$
		the map $m$ in \eqref{nostramram} is an isomorphism, then the
		closure of $\pgb ( \X_{\tg}(\tG, \ttheta))$ in $\A^\delta_{g-1+\frac{b}{2}}$ is a special
		subvariety.
	\end{THEOR}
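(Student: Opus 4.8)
Set $p:=g-1+\frac{b}{2}$. The plan is to exhibit a PEL type special subvariety $\zg\subseteq\A^\delta_p$ such that $Z:=\pgb(\X_{\tg}(\tG,\ttheta))$ is contained in $\zg$ and dense in it; then $\overline Z=\zg$ is special and we are done. To build $\zg$: the group $\tG$ acts on every curve $\tC_t$ of the family (it is the Galois group), hence on $J\tC_t$ compatibly with the principal polarisation, and it preserves the Prym $P(\tC_t,C_t)$ with its polarisation $\Xi$, the central involution $\sigma$ acting as $-1$. Since the topological type of the $\tG$-action is constant along the connected family parametrised by $\T_{0,r}$, a symplectic identification of $H_1(P(\tC_t,C_t),\Z)$ with a fixed lattice $\Lambda$ carrying an alternating form $Q$ of type $\delta$ realises (the image of) $\tG$ as a finite subgroup of $\Sp(\Lambda,Q)$, and the whole family maps into the connected submanifold $\sieg(U,Q)^{\tG}$. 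By Theorem \ref{bert} its image in $\A^\delta_p$ is an irreducible, closed, PEL type special subvariety $\zg:=\zg(D_{\tG})$, and $\dim\zg=\dim(S^2V_-)^{\tG}$: indeed by Theorem \ref{bert} the dimension is $\dim(S^2\R^{2p})^{\tG}$ for $\R^{2p}$ equipped with the complex structure $J$, and as a $\tG$-representation $(\R^{2p},J)$ is isomorphic, up to duality and complex conjugation (which do not change the dimension of the invariants of a symmetric square), to $H^0(P(\tC_t,C_t),\Omega^1)\cong V_-$. In particular $Z\subseteq\zg$.

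Next I would compute the codifferential at the chosen point $[\tC]$ of the corestricted Prym map $\pgb\colon\X_{\tg}(\tG,\ttheta)\to\zg$. Working with the natural orbifold structures (equivalently, pulling back along $\T_{0,r}\cong\T_{\tg}^{\tG_{\ttheta}}$), the tangent space of $\X_{\tg}(\tG,\ttheta)$ at $[\tC]$ is the space $H^1(\tC,T_{\tC})^{\tG}$ of $\tG$-equivariant first-order deformations, of dimension $r-3$; by $\tG$-equivariant Serre duality and exactness of the functor of $\tG$-invariants in characteristic zero its dual is $H^0(\tC,\omega^2_{\tC})^{\tG}=W_+^{\tG}$, so $\dim W_+^{\tG}=r-3$. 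On the other hand, by Theorem \ref{bert} the cotangent space of $\zg$ at $[P(\tC,C)]$ is the quotient $(S^2V_-)^{\tG}$ of $T^*_{[P(\tC,C)]}\A^\delta_p=S^2V_-$. With these identifications the codifferential of $\pgb\colon\X_{\tg}(\tG,\ttheta)\to\zg$ at $[\tC]$ is exactly the multiplication map $m$ of \eqref{nostramram}: this is the content of the discussion preceding the statement, where the codifferential of the Prym map is identified with the multiplication map \eqref{dp} and its restriction to the $\tG$-equivariant directions with $m$.

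Now assume $m$ is an isomorphism at $[\tC]$. Then the differential of $\pgb\colon\X_{\tg}(\tG,\ttheta)\to\zg$ at $[\tC]$ is an isomorphism (both spaces now have dimension $r-3$), so $\pgb$ is \'etale there and $Z$ contains a nonempty open subset of $\zg$; this subset is dense since $\zg$ is irreducible, and as $\zg$ is closed we conclude $\overline Z=\zg$, which is a special subvariety. I expect the main work to lie in the second paragraph: justifying rigorously the three identifications $T^*_{[\tC]}\X_{\tg}(\tG,\ttheta)=W_+^{\tG}$, $T^*_{[P(\tC,C)]}\zg=(S^2V_-)^{\tG}$, and ``the codifferential of $\pgb$ equals $m$'', together with the bookkeeping between $\tG$-invariants and $\tG$-coinvariants that underlies them. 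It is also worth stressing that one genuinely needs $m$ to be bijective and not merely surjective: since $\dim\X_{\tg}(\tG,\ttheta)=\dim W_+^{\tG}$ whereas $\dim\zg=\dim(S^2V_-)^{\tG}$, surjectivity of $m$ alone would only show that $Z$ is a (possibly proper, possibly non-special) subvariety of $\zg$ --- exactly the phenomenon occurring in Example~5 --- and it is the injectivity of $m$ that forces the family to fill up the PEL subvariety $\zg$.
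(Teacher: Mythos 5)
Your proposal is correct and is essentially the proof given for this theorem in the cited references (\cite[Thm.~3.2]{fg}, \cite[Thms.~3.2, 4.2]{cfgp}, following \cite[\S 3]{fgp}): one traps the image of $\X_{\tg}(\tG,\ttheta)$ inside the PEL subvariety $\zg(D_{\tG})$ of dimension $\dim(S^2V_-)^{\tG}$ via Theorem \ref{bert}, identifies the codifferential of the corestricted Prym map with $m$, and uses bijectivity of $m$ to conclude that the $(r-3)$-dimensional image is dense in the irreducible $\zg(D_{\tG})$. Your closing observation that injectivity of $m$ is what forces the family to fill up $\zg(D_{\tG})$ (and that condition \eqref{condA} alone does not suffice, as Example~\ref{A-not-B} shows) is exactly the point the paper emphasises.
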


	We would like to use Theorem \ref{teo1ram} to construct Shimura curves contained in the closure of  $\pgb(\rgb)$ in ${\A}^{\delta}_{g-1+\frac{b}{2}}$ and intersecting $\pgb(\rgb)$. 
	
	The hypothesis in Theorem \ref{teo1ram} is called condition
	\begin{equation}
	\label{condB}
	\tag{B} m : (S^2V_-)^\tG \lra W_+^G \text { is an isomorphism} .
	\end{equation}
	
	This implies  condition 
	\begin{equation}
	\label{condA}
	\tag{A} \dim (S^2V_-)^\tG = r-3.
	\end{equation}
	
	A sufficient condition ensuring
	\eqref{condB} is the following
	
	\begin{equation}
	\label{condB1}
	\tag{B1} (S^2V_-)^\tG \cong Y_1 \otimes Y_2
	\end{equation}
	where $\dim Y_1 =1$, $\dim Y_2 = r-3$. 
	
	In fact, we have the following 
	
	\begin{LEM}
		Assume that \eqref{condB1} holds, then $m : (S^2V_-)^\tG \lra W_+^G$ is injective. 
		Hence since also  \eqref{condA} holds, $m$ is an isomorphism, so condition \eqref{condB} holds.
	\end{LEM}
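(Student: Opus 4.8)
The strategy is to reduce the claim to the injectivity of $m$ and then to derive injectivity from \eqref{condB1} by recognising $m$, on $(S^2V_-)^\tG$, as multiplication by a single nonzero holomorphic $1$-form on $\tC$.

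First I would identify the target. A $\tG$-invariant holomorphic quadratic differential on $\tC$ is in particular $\sigma$-invariant, so $W^\tG=W_+^\tG=H^0(\tC,\omega_\tC^2)^\tG$, and pulling back along $\tC\to\P^1=\tC/\tG$ identifies this space with the space of quadratic differentials on $\P^1$ having at most a simple pole at each of the $r$ branch points, that is, with $H^0(\P^1,\omega_{\P^1}^2(p_1+\dots+p_r))$; this is the cotangent space to $\T_{0,r}$ at $t$, of dimension $r-3$. Hence, by \eqref{condA} (which is anyway a formal consequence of \eqref{condB1}, since $\dim Y_1\otimes Y_2=1\cdot(r-3)$), the source and target of $m$ in \eqref{nostramram} both have dimension $r-3$, so it suffices to prove that $m$ is injective.

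Next I would unwind \eqref{condB1}: the one-dimensional factor $Y_1$ is to be read as a $\tG$-stable line $Y_1\subseteq V_-$ — for $\tG$ abelian, an eigenline $V_-^\chi$ for a character $\chi\ne\bar\chi$, with $Y_2=V_-^{\bar\chi}$ — and the isomorphism $(S^2V_-)^\tG\cong Y_1\otimes Y_2$ is realised by the symmetrisation $Y_1\otimes Y_2\to S^2V_-$, $v\otimes w\mapsto v\odot w$, which is injective because $Y_1\cap Y_2=0$ and has image exactly $(S^2V_-)^\tG$. Since $m$ is the multiplication map $S^2H^0(\tC,\omega_\tC)\to H^0(\tC,\omega_\tC^2)$, under this identification it carries $v\otimes w$ to the product $vw$ of the two $1$-forms.

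Now fix a generator $0\ne v$ of $Y_1$, so that $Y_1\otimes Y_2\cong Y_2$ via $v\otimes(-)$ and $m$ becomes the map $Y_2\to W_+^\tG$, $w\mapsto vw$. If $w\in Y_2\subseteq H^0(\tC,\omega_\tC)$ satisfies $vw=0$ in $H^0(\tC,\omega_\tC^2)$, then $w$ vanishes on the dense open subset of the connected curve $\tC$ on which $v$ is nonzero, whence $w=0$; thus this map, and therefore $m$, is injective. Combined with the dimension count, $m$ is an isomorphism, which is \eqref{condB}. The one delicate point is the third paragraph: one must know that the splitting furnished by \eqref{condB1} is the geometric one coming from the $\tG$-isotypic decomposition of $V_-\subseteq H^0(\tC,\omega_\tC)$, so that $Y_1$ consists of honest holomorphic $1$-forms rather than being an abstract multiplicity space — this is precisely what makes $m$ literally multiplication by a fixed nonzero section, and for abelian $\tG$ it is explicit and is exactly what the \MAGMA\ script checks.
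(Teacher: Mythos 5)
Your proof is correct and follows essentially the same route as the paper's: both arguments observe that under \eqref{condB1} every element of $(S^2V_-)^{\tG}$ is a decomposable tensor $v\otimes w$ with $v$ a fixed generator of $Y_1$, and then use that multiplication by the nonzero holomorphic form $v$ on the integral curve $\tC$ is injective. The only additions you make — the explicit identification of $W_+^{\tG}$ with the $(r-3)$-dimensional space $H^0(\P^1,\omega_{\P^1}^2(p_1+\dots+p_r))$, and the remark that $Y_1$ and $Y_2$ must be genuine $\tG$-subrepresentations of $V_-$ rather than abstract multiplicity spaces — are points the paper leaves implicit, and your flagging of the latter is a legitimate and worthwhile clarification of how condition \eqref{condB1} is to be read.
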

	\begin{proof}
		Assume \eqref{condB1} holds and take bases for $Y_1$ and $Y_2$: $Y_1 = \sx v \xs$,  $Y_2 = \sx w_1,\dots,w_{r-3} \xs$. 
		Then a basis for $ (S^2V_-)^\tG \cong Y_1 \otimes Y_2$ is given by $\{v \otimes w_i\}$, $i=1,\dots,r-3$. 
		Hence $\forall x \in (S^2V_-)^\tG $, we have $x = \sum_{i=1}^{r-3} a_i (v \otimes w_i )= v \otimes (\sum_{i=1}^{r-3} a_i w_i ) $ is a decomposable tensor. 
		Therefore $m(x) = v \cdot  (\sum_{i=1}^{r-3} a_i w_i ) = 0$  if and only if $0 = \sum_{i=1}^{r-3} a_i w_i$, so $a_i =0$, $\forall i=1,\dots,r-3$, and $x =0$. 
		So $m$ is injective. Hence since also \eqref{condA} holds, $m$ is an isomorphism. 
	\end{proof}
	
	\begin{REM}
		Notice that in the case of one dimensional families  (i.e.\ $r=4$), if the group $\tG$ is abelian and condition \eqref{condA} holds,  then condition \eqref{condB1} (hence also \eqref{condB}) is automatically satisfied ( \cite [Rmk.~3.3]{fg},   \cite [Rmk.~3.4]{cfgp}). This is no longer true for higher dimensional families, as it is shown in Section~5,  Example~\ref{A-not-B}. 
	\end{REM}
	
	Finally in \cite{no1} it is proven that $\pgb$ is an embedding for all $b \geq 6$ and for all $g>0$, hence if $b \geq 6$, and $g>0$ condition  \eqref{condA} implies condition \eqref{condB}.

	\section{Abelian covers of $\P^1$ and their Prym map}
	\label{Abelian covers of line}
	
	In this section, we follow closely \cite{cfgp} and also \cite{MZ} whose notations come mostly from \cite{W}. 
	More details about Prym varieties can be found in \cite{BL}. 
	
	An abelian Galois cover of $\P^1$ is determined by a collection of equations in the following way: 
	
	Consider an $m\times r$ matrix $A=(r_{ij})$ whose entries $r_{ij}$ are in $\Z/N\Z$ for some $N\geq 2$. 
	Let $\overline{\C(x)}$ be the algebraic closure of $\C(x)$. For each $i=1,\dots,m,$ choose a function $w_{i}\in\overline{\C(x)}$ with
	\begin{equation}\label{equation abelian}
	w_{i}^{N}=\prod_{j=1}^{r}(x-t_{j})^{\widetilde{r}_{ij}}\quad\text{for }i=1,\dots, m,
	\end{equation}
	in $\C(x)[w_{1},\dots,w_{m}]$. 
	Here $\widetilde{r}_{ij}$ is the lift of $r_{ij}$ to $\Z \cap [0,N)$ and $t_j\in \C$ for $j=1,\dots, r$. We denote $\widetilde{A}=(\widetilde{r}_{ij})$.
	Notice that \ref{equation abelian} gives in general only a singular affine curve and we take a smooth projective model associated to this affine curve. 
	We impose the condition that the sum of the columns of $A$ is zero (when considered as a vector in $(\Z/N\Z)^m$). 
	This implies that the cover given by \ref{equation abelian} is \emph{not} ramified over the infinity. 
	We call the matrix $A$, the matrix of the covering. We also remark that all operations with rows and columns will be carried out over the ring $\Z/N\Z$, i.e., they will be considered modulo $N$. 
	The local monodromy around the branch point $t_{j}$ is given by the column vector $(r_{1j},\dots, r_{mj})^{t}$ 
	and so the order of ramification over $t_{j}$ is $\frac{N}{\gcd(N,\widetilde{r}_{1j},\dots,\widetilde{r}_{mj})}$. 
	Using this and the Riemann-Hurwitz formula, the genus $g$ of the cover can be computed by:
	\begin{equation}
	g=1+d\left(\frac{r-2}{2}-\frac{1}{2N}\sum_{j=1}^{r}\gcd(N,\widetilde{r}_{1j},\dots,\widetilde{r}_{mj})\right),
	\end{equation}
	where $d$ is the degree of the covering which is equal, as pointed out above, to the column span (equivalently row span)  of the matrix $A$. 
	In this way, the Galois group $G$ of the covering will be a subgroup of $(\Z/N\Z)^{m}$. 
	Note also that this group is isomorphic to the column span of the above matrix.
	We remark that two families of abelian covers with matrices $A$ and $A^{\prime}$ over the same $\Z/N\Z$ such that $A$ and $A^{\prime}$ have equal row spans are isomorphic. \par 
	By the construction of abelian covers of $\P^1$, one can compute the invariants of such a cover quite easily. 
	Let $G$ be a finite abelian group. We denote by $G^*$ the group of the characters of $G$, i.e., $G^*=\Hom(G,\C^{*})$. 
	Consider a Galois covering $\pi: X\rightarrow \P^{1}$ with Galois group $G$. 
	The group $G$ acts on the sheaves $\pi_{*}(\mathcal{O}_X)$ and $\pi_{*}(\C)$ via its characters and we get corresponding eigenspace decompositions 
	$\pi_{*}(\mathcal{O}_X)=\bigoplus_{\chi \in G^*} \pi_{*}(\mathcal{O}_X)_{\chi}$ and $\pi_{*}(\C)=\bigoplus_{\chi \in G^*}\pi_{*}(\C)_{\chi}$. 
	Let $L^{-1}_{\chi}=\pi_{*}(\mathcal{O}_{X})_{\chi}$ and $\C_{\chi}= \pi_{*}(\C)_{\chi}$ denote the eigensheaves corresponding to the character $\chi$. 
	$L_{\chi}$ is a line bundle and outside of the branch locus of $\pi$, $\C_{\chi}$ is a local system of rank 1.  
	\begin{remark} \label{abeliangroupcharacter}
		Let $G$ be a finite abelian group, then the character group $G^*=\Hom(G,\C^{*})$ is isomorphic to $G$. 
		To see this, first assume that $G=\Z/N\Z$ is a cyclic group. 
		Fix an isomorphism between $\Z/N\Z$ and the group of $N$-th roots of unity in $\C^{*}$ via $1\mapsto \exp(2\pi \i/N)$. 
		Now the group $G^*$ is isomorphic to this latter group via $\chi\mapsto \chi(1)$. 
		In the general case, $G$ is a product of finite cyclic groups, so this isomorphism extends to an isomorphism $\varphi_G: G \xrightarrow{\sim} G^*$. 
		In the sequel, we use this isomorphism frequently to identify elements of $G$ with its characters.
	\end{remark}
	
	For our applications, with notations as in the previous pages, 
	we fix an isomorphism of $G$ with a product of $\Z/N\Z$'s and an embedding of $G$ into $(\Z/N\Z)^{m}$.
	
	Let $l_{j}$ be the $j$-th column of the matrix $A$. As mentioned earlier, the group $G$ can be realized as the column span of the matrix $A$. 
	Therefore we may assume that $l_{j}\in G$. 
	For a character $\chi$, $\chi(l_{j})\in \C^{*}$ and since $G$ is finite $\chi(l_{j})$ will be a root of unity.
	Let $\chi(l_{j})=\exp(2\alpha_{j}\pi\i/N)$, where $\alpha_{j}$ is the unique integer in $[0,N)$ with this property. 
	Equivalently, the $\alpha_{j}$ can be obtained in the following way: 
	let $n\in G\subseteq (\Z/N\Z)^{m}$ be the element corresponding to $\chi$ under the above isomorphism. 
	We regard $n$ as an $1\times m$ matrix. Then the matrix product $n\cdotp A$ is meaningful and $n\cdotp A=(\alpha_{1},\dots,\alpha_{r})$. 
	Here all of the operations are carried out in $\Z/N\Z$ but the $\alpha_{j}$ are regarded as	integers in $[0,N)$. Furthermore, we denote by $\widetilde{n}$ the lift of $n$ to $(\Z\cap [0,N))^{m}$ and set $\widetilde{n}\cdotp \widetilde{A}=(\widetilde{\alpha}_{1},\dots,\widetilde{\alpha}_{r})$. In other words $\tilde\alpha_j = \sum_{i=1}^m n_i \tilde{r}_{ij} \in \Z$ (but $\tilde\alpha_j$ is not necessarily in $\Z\cap [0,N)$).
	Using the above facts, we occasionally consider a character of $G$ as an element of this group without referring to isomorphism $\varphi_{G}$.
	
	Let us denote by $\omega_X$ the canonical sheaf of $X$. 
	Similar to the case of $\pi_{*}(\mathcal{O}_X)$, the sheaf $\pi_{*}(\omega_X)_{\chi}$ decomposes according to the action of $G$. 
	For the line bundles $L_{\chi}$ corresponding to the character $\chi$ associated to the element $a\in G$ and $\pi_{*}(\omega_X)_{\chi}$ we have:
	\begin{LEM} \label{eigenbundleformula}
		With notations as above $L_{\chi}=\mathcal{O}_{\P^{1}}(\sum_{j=1}^{r}\langle\frac{\alpha_{j}}{N}\rangle)$, 
		where $\langle x\rangle$ denotes the fractional part of the real number $x$ and
		\[\pi_{*}(\omega_X)_{\chi}= \omega_{\P^1} \otimes L_{\chi^{-1}}=\mathcal{O}_{\P^1}\left(-2+\sum_{j=1}^{r}\left\langle -\frac{\alpha_{j}}{N}\right\rangle\right).\]
	\end{LEM}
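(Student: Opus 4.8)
The plan is to compute the eigensheaf $L_\chi^{-1} = \pi_*(\mathcal O_X)_\chi$ by a local analysis at each branch point, then identify $\pi_*(\omega_X)_\chi$ via relative duality. First I would recall that the cover $\pi\colon X\to\P^1$ is cyclic (for fixed $\chi$, one passes to the intermediate $\Z/N\Z$-cover $X_\chi\to\P^1$ defined by a single equation $w^N=\prod_j (x-t_j)^{\tilde\alpha_j}$, where $\tilde\alpha_j = \tilde n\cdot\tilde A$ as in the text); the $\chi$-eigensheaf of $\pi_*$ agrees with that of this cyclic quotient, so it suffices to treat the cyclic case. For a cyclic cover, $\pi_*(\mathcal O_X) = \bigoplus_{k=0}^{N-1} L_k^{-1}$ with $L_k$ a line bundle on $\P^1$, and the module structure is governed by $w$, i.e. $L_k^{-1}\cdot L_\ell^{-1} \hookrightarrow L_{k+\ell}^{-1}$ with equality off the branch locus; this is the standard presentation of a cyclic cover, and $L_1$ is determined by the ramification data.

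The key computation is local. Near a branch point $t_j$, write the local parameter $u=x-t_j$; the equation $w^N = u^{\tilde\alpha_j}\cdot(\text{unit})$ shows that the integral closure of $\C[[u]]$ in the degree-$N$ extension decomposes into eigenspaces, and the $\chi$-eigenspace (with $\chi(l_j)=\exp(2\pi\i\alpha_j/N)$, $\alpha_j\in[0,N)$) is generated over $\C[[u]]$ by $u^{-\lfloor \alpha_j/N\rfloor}\cdot(\text{local section})$ — but since $\alpha_j\in[0,N)$ we have $\lfloor\alpha_j/N\rfloor=0$ and the relevant twist is measured by $\langle \alpha_j/N\rangle$. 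Concretely, I would show that $L_\chi = \mathcal O_{\P^1}\bigl(\sum_j \lceil \tilde\alpha_j/N\rceil\cdot\{pt_j\}\bigr)$ in general and that this equals $\mathcal O_{\P^1}(\sum_j\langle\alpha_j/N\rangle)$ once one uses $\alpha_j\equiv\tilde\alpha_j \pmod N$ with $\alpha_j\in[0,N)$, together with the hypothesis that the columns of $A$ sum to zero (so that there is no contribution at $\infty$ and the degree count is consistent). The fractional-part bookkeeping here — keeping straight the difference between $\tilde\alpha_j$ (an arbitrary integer) and $\alpha_j$ (its residue in $[0,N)$), and the sign flip between $\chi$ and $\chi^{-1}$ — is the step most prone to error, and I would do it carefully on one model case (say $N$ prime) before asserting the general formula.

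Finally, for the canonical sheaf: by relative duality for the finite flat morphism $\pi$, $\pi_*\omega_X \cong \mathcal Hom_{\mathcal O_{\P^1}}(\pi_*\mathcal O_X,\omega_{\P^1})$, and this is $G$-equivariant in the way that sends the $\chi$-eigensheaf of $\pi_*\mathcal O_X$, namely $L_{\chi}^{-1}$, to the $\chi^{-1}$-dual twisted by $\omega_{\P^1}$; hence $\pi_*(\omega_X)_\chi \cong \omega_{\P^1}\otimes L_{\chi^{-1}}$. Substituting the formula for $L_{\chi^{-1}}$ — obtained from the formula for $L_\chi$ by replacing $\alpha_j$ with its complement, i.e. using $\langle -\alpha_j/N\rangle$ in place of $\langle\alpha_j/N\rangle$ — and $\omega_{\P^1}=\mathcal O_{\P^1}(-2)$ gives exactly
\[
\pi_*(\omega_X)_\chi = \mathcal O_{\P^1}\Bigl(-2+\sum_{j=1}^r\bigl\langle -\tfrac{\alpha_j}{N}\bigr\rangle\Bigr),
\]
as claimed. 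I expect no obstacle in the duality step (it is formal); the whole content is the local eigenspace computation and the consistent normalization of the exponents, which is why I would present that part in full and merely cite \cite{W}, \cite{MZ} for the standard cyclic-cover formalism.
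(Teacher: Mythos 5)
Your route is sound and is genuinely more self-contained than the paper's: the paper disposes of the first assertion with a one-line remark that sections of $\mathcal{O}_{\P^1}(\sum_j\langle\alpha_j/N\rangle)$ are exactly the functions on which $G$ acts through $\chi$, and then cites Pardini (\cite{P2}, Prop.~1.2) for the identity $\pi_*(\omega_X)_\chi=\omega_{\P^1}\otimes L_{\chi^{-1}}$. You instead reprove both pieces: the reduction to the cyclic quotient $X/\ker\chi$, the local eigenspace computation of the integral closure at each $t_j$, and relative duality $\pi_*\omega_X\cong\mathcal{H}om(\pi_*\mathcal{O}_X,\omega_{\P^1})$ with the character inversion. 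All three steps are correct in principle; the duality step in particular is exactly what underlies Pardini's proposition, so you are unpacking the citation rather than bypassing it. What your version buys is independence from \cite{P2}; what it costs is that you must carry out the exponent bookkeeping yourself, and that is where your sketch as written goes wrong.

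Concretely, the intermediate formula $L_\chi=\mathcal{O}_{\P^1}\bigl(\sum_j\lceil\tilde\alpha_j/N\rceil\cdot\{t_j\}\bigr)$ is not equal to $\mathcal{O}_{\P^1}(\sum_j\langle\alpha_j/N\rangle)$: for $\tilde\alpha_j=3$, $N=2$ one has $\lceil\tilde\alpha_j/N\rceil=2$ while $\langle\alpha_j/N\rangle=1/2$, and the discrepancy does not cancel in the sum. The correct local statement is that the $\chi$-eigenspace of the integral closure at $t_j$ is generated over $\C[[u]]$ by $\prod_i w_i^{n_i}\cdot u^{-\lfloor\tilde\alpha_j/N\rfloor}$, so the local contribution to $\deg L_\chi$ is $\tilde\alpha_j/N-\lfloor\tilde\alpha_j/N\rfloor=\langle\alpha_j/N\rangle$, a rational number in $[0,1)$; only the total degree $\sum_j\langle\alpha_j/N\rangle$ is an integer (using that the columns of $A$ sum to zero, whence no ramification at $\infty$), and on $\P^1$ that integer determines $L_\chi$. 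You flagged this as the error-prone step and promised to redo it carefully, so I read it as a slip rather than a structural gap, but as written the claimed equality is false and must be replaced by the floor/fractional-part computation above before the argument closes.
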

	\begin{proof}
		Note that since the sum of the columns of the matrix $A$ is zero, the above sum is an integer. 
		One can easily see that each section of the line bundle $\mathcal{O}_{\P^1}(\sum_{j=1}^{r}\langle \frac{\alpha_{j}}{N}\rangle)$ 
		is a function on which the Galois group acts as $\chi$ and conversely any such section must be a function of the above form. 
		The rest of the lemma is \cite{P2}, Proposition 1.2.
	\end{proof}
	
Consider now an abelian group $\tG\subseteq (\Z/N\Z)^{m}$ containing a central involution $\sigma$ as in section 3 and a $\tG$-abelian cover given by the equations \eqref{equation abelian}. 
	Let $n\in\tG$ be the element $(n_1,\dots, n_m)\in (\Z/N\Z)^{m}$ under the inclusion $\tG\subseteq (\Z/N\Z)^{m}$ with $n_i\in\Z\cap[0,N)$. 
	By Lemma~\ref{eigenbundleformula}, $\dim H^0(\tC,\omega_{\tC})_{n}=-1+\sum_{j=1}^{r}\langle-\frac{\alpha_{j}}{N}\rangle$. 
	A basis for the $\C$-vector space $H^0(\tC,\omega_{\tC})$ is given by the forms 
	\begin{equation}
	\omega_{n,\nu}=x^{\nu} w_{1}^{n_1}\cdots w_{m}^{n_m}\prod_{j=1}^{r} (x-t_j)^{\lfloor -\frac{\tilde\alpha_j}{N}\rfloor}dx.
	\end{equation}
	Here $\tilde\alpha_j$ is as introduced above and $0\leq\nu\leq d_{n}-1=-2+\sum_{j=1}^{r}\langle-\frac{\alpha_{j}}{N}\rangle$. 
	The fact that the above elements constitute a basis can be seen in \cite{MZ}, proof of Lemma 5.1, where the dual version for $H^1(\tC,\mathcal{O}_\tC)$ is proved. Note that in \cite{MZ}, Proposition 2.8, the  formula for $d_n$ has been proven using a different method.
	
	The following lemma is key to our later analysis and shows that we can use the dimension $d_n$ of the eigenspace $H^0(C,\omega_C)_n$ (computed in \cite{MZ}, Proposition 2.8) in our computations with $H^0(C,\omega_C)_{-,n}$. 
	We remark that if $n=(n_1,\dots, n_m)\in G\subset (\Z/N\Z)^{m}$, we consider the $n_i\in [0,N)$ and their sum as integers.

	By the construction of an abelian cover of $\P^1$ at the beginning of this section, the action of $\sigma$ is given by 
	$w_i\mapsto -w_i$ for some subset of $\{1,\dots, m\}$ (and naturally $w_j\mapsto w_j$ for $j$ in the complement of this subset).  
	We may therefore without loss of generality assume that $\sigma(w_i)= -w_i$ for $i\in\{1,\dots,k\}$ for some $k\leq m$ (and $\sigma(w_i)= w_i$ for the $i>k$). The following lemma has been proven in \cite{moh}, Lemma 2.5.
	\begin{LEM} \label{dimeigspace}
		The group $\tG$ acts on $H^0(\tC,\omega_{\tC})_{-}$ and for $n\in\tG$, it holds that $H^0(\tC,\omega_\tC)_{-,n}= H^0(\tC,\omega_\tC)_n$ if $n_1+\cdots+n_k$ is odd and $H^0(\tC,\omega_\tC)_{-,n}=0$ otherwise. Similar equalities hold for $H^1(\tC,\C)_{-,n}$. 
	\end{LEM}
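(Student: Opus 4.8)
The plan is to reduce the statement to a single observation: since $\tG$ is abelian, the central involution $\sigma$ acts by one scalar on each character eigenspace $H^0(\tC,\omega_\tC)_n$, and that scalar equals $(-1)^{n_1+\cdots+n_k}$.

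First I would note that since $\tG$ is abelian, $\sigma$ commutes with the whole $\tG$-action on $V:=H^0(\tC,\omega_\tC)$; hence $\tG$ preserves the $(\pm1)$-eigenspace decomposition $V=V_+\oplus V_-$ of $\sigma$ and in particular acts on $H^0(\tC,\omega_\tC)_-$, which is the first assertion. Next I would make $\sigma$ explicit inside $\tG\subseteq(\Z/N\Z)^m$: an element $g=(g_1,\dots,g_m)$ acts by $w_i\mapsto\exp(2\pi\i\,g_i/N)\,w_i$, and $N$ is necessarily even, since $\sigma$ fixes $w_i^N=\prod_j(x-t_j)^{\widetilde r_{ij}}$ while $\sigma(w_i)^N=(-1)^N w_i^N$; thus the labelling $\sigma(w_i)=-w_i$ for $i\le k$ and $\sigma(w_i)=w_i$ for $i>k$ forces $\sigma$ to be the vector $s:=(\tfrac{N}{2},\dots,\tfrac{N}{2},0,\dots,0)$ with exactly $k$ nonzero entries, each equal to $\tfrac{N}{2}$. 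Then I would compute $\sigma$ on the eigenform basis: since $\sigma$ fixes $x$, hence $x^\nu$, $dx$ and $\prod_{j=1}^r(x-t_j)^{\lfloor-\tilde\alpha_j/N\rfloor}$, applying it to $\omega_{n,\nu}$ rescales only the factor $w_1^{n_1}\cdots w_k^{n_k}$, so $\sigma\cdot\omega_{n,\nu}=(-1)^{n_1+\cdots+n_k}\,\omega_{n,\nu}$; equivalently, under the identification of $\tG$ with $\tG^{*}$ from Remark~\ref{abeliangroupcharacter}, $\sigma$ acts on $H^0(\tC,\omega_\tC)_n$ by $\chi_n(\sigma)=\exp(\pi\i(n_1+\cdots+n_k))=(-1)^{n_1+\cdots+n_k}$ (the pairing of $n$ against $s$), the parity of $n_1+\cdots+n_k$ being independent of the choice of representatives in $[0,N)$ because $N$ is even. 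It follows that if $n_1+\cdots+n_k$ is odd then $H^0(\tC,\omega_\tC)_n\subseteq V_-$, so $H^0(\tC,\omega_\tC)_{-,n}=H^0(\tC,\omega_\tC)_n$, and if it is even then $H^0(\tC,\omega_\tC)_n\subseteq V_+$, so $H^0(\tC,\omega_\tC)_{-,n}=0$. For $H^1(\tC,\C)$ the identical argument applies, using that the Hodge decomposition $H^1(\tC,\C)=H^0(\tC,\omega_\tC)\oplus\overline{H^0(\tC,\omega_\tC)}$ is equivariant for $\tG$ and $\sigma$, so that $\sigma$ acts on $H^1(\tC,\C)_n$ again by $(-1)^{n_1+\cdots+n_k}$.

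There is no deep obstacle here; the points that need care are the identification of $\sigma$ with $(\tfrac{N}{2},\dots,\tfrac{N}{2},0,\dots,0)$ --- which relies on $N$ being even and on $\sigma$ acting as $-1$ on precisely the variables $w_1,\dots,w_k$ --- and the verification that the scalar by which $\sigma$ acts on $H^0(\tC,\omega_\tC)_n$ is genuinely $\chi_n(\sigma)$ for the correspondence between elements of $\tG$ and characters used throughout this section; both follow at once from Lemma~\ref{eigenbundleformula} and the explicit form of the eigenforms $\omega_{n,\nu}$.
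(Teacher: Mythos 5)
Your argument is correct and essentially complete. Note that the paper does not actually prove this lemma: it only cites Lemma~2.5 of \cite{moh}, so there is no internal proof to compare against, and a self-contained verification like yours is genuinely useful here. The key steps are all sound: since $\tG$ is abelian, $\sigma$ acts by the single scalar $\chi_n(\sigma)=\pm1$ on each character eigenspace, so each $H^0(\tC,\omega_{\tC})_n$ lies entirely in $V_+$ or entirely in $V_-$; identifying $\sigma$ with $(\frac{N}{2},\dots,\frac{N}{2},0,\dots,0)$ (which also forces $N$ to be even) and evaluating on the explicit eigenforms $\omega_{n,\nu}$ from Section~\ref{Abelian covers of line} gives $\chi_n(\sigma)=(-1)^{n_1+\cdots+n_k}$, whence the dichotomy in Lemma~\ref{dimeigspace}. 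The only point you pass over quickly is the $H^1(\tC,\C)$ case: complex conjugation carries the $\chi_n$-eigenspace of $H^0(\tC,\omega_\tC)$ to the $\chi_{-n}$-eigenspace, so $H^1(\tC,\C)_n= H^0(\tC,\omega_\tC)_n\oplus\overline{H^0(\tC,\omega_\tC)_{-n}}$ rather than $H^0(\tC,\omega_\tC)_n\oplus\overline{H^0(\tC,\omega_\tC)_{n}}$; but since $N$ is even the representatives of $n$ and $-n$ in $[0,N)^m$ have coordinate sums of the same parity, so $\sigma$ acts by the same sign on both summands and your conclusion is unaffected. (Alternatively, one can bypass the Hodge decomposition altogether: $\sigma$ acts on the $\chi_n$-isotypic component of any $\tG$-module by the scalar $\chi_n(\sigma)$, which you have already computed to be $(-1)^{n_1+\cdots+n_k}$.)
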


Families of abelian covers of $\P^1$ can be constructed as follows: Denote by $Y_r$ the complement of the big diagonals in $(\mathbb{A}_{\C}^{1})^{r}$, i.e., 
	$Y_r= \{(t_{1},\dots,t_{r})\in(\mathbb{A}_{\C}^{1})^{r}\mid t_{i}\neq t_{j}, \  \forall i\neq j \}$. 
	Over this affine open set we define a family of abelian covers of $\P^1$ by the equation \ref{equation abelian} 
	with branch points $(t_{1},\dots,t_{r})\in Y_r$ and $\widetilde{r}_{ij}$ the lift of $r_{ij}$ to $\Z\cap[0,N)$ as before. 
	Varying the branch points we get a family $f:\tilde{\mathcal C}\to Y_r$ of smooth projective curves whose fibers $\tC_t$ are abelian covers of $\P^1$ introduced above. The subvariety of the moduli space corresponding to this family of curves is  $ \M_{\tg}(\tG, \ttheta) $, with $ \ttheta : \Gamma_r \ra \tG \subseteq (\Z/N\Z)^m$  the epimorphism $\ttheta(\gamma_k) = A_k$, where $A_k$ is the $k$-th column of the matrix $A=(r_{ij})$.

	\section{Examples in the Prym locus}
	
	In this section we describe some examples of families of covers satisfying conditions \eqref{condA}. The first one has abelian Galois group $\tG$ and satisfies  \eqref{condB1} (hence \eqref{condB}).  The second and the third one have abelian Galois group, they do not satisfy  \eqref{condB1}, and we explicitly show that \eqref{condB} holds using the techniques explained in Section 4. 
	Example 4 has non abelian Galois group and conditon   \eqref{condB1} is not satisfied. In this case we prove that it yields a  Shimura subvariety of  $\A_{12}$ of dimension 2 using \cite [Thm. 3.8]{fgs} and a geometric description of the family. 
	Example 5 satisfies condition \eqref{condA} but not condition \eqref{condB} and we show that it yields a variety which is not totally geodesic, hence it is not Shimura. \\
	
	To describe the examples we first give the values of $\tg$, $g$ and $b$. We also indicate a number $\#$ that refers to the numbering of the examples that we found with the  \MAGMA\ script with the given values of $r$, $\tg$, $g$ and $b$.  For the group $\tG$ we use the presentation  and the name of the group given by  \MAGMA. 
	In Example 5 we give the decomposition of $V_+$ and $V_{-}$ as a direct sum of irreducible representations of $\tG$ and  also  for the  irreducible representations we use the notation of \MAGMA. \\
	
	The first example that we describe satisfies condition \eqref{condB1}. 
	
	\begin{example}
		$r=5$, $\tg = 12$, $g=6$, $b=2$. \\
		$\tG = G(10,2) = \Z/10\Z = \langle g_1 \rangle$. \\
		$(\ttheta(\gamma_1),\dots,\ttheta(\gamma_5))=(g_1,g_1,g_1^2,g_1^2,g_1^4),  \quad\sigma=g_1^5$. \\
		This is the family with equation given by 
		$y^{10}=(x-t_1)(x-t_2)(x-t_3)^2(x-t_4)^2(x-t_5)^4$ and 
		$\sigma(y) = -y$. Using the notation of the previous section, the matrix giving the monodromy is 
		$(1,1,2,2,4)$. Hence we have $d_1 = 3$, $d_3 = 2$, $d_7 =1$, $d_9 = 0$, where $d_i = \dim(W_i)$, $W_i = \{ \omega \in H^0(K_{\tC}) \ | \ g_1 (\omega) = \xi_{10}^{-i} \omega\}$, with $\xi_{10}$ a primitive $10^{th}$ root of unity. So we have $V_- = W_1 \oplus W_{3} \oplus W_7$, hence $(S^2V_-)^{\tG} = W_3 \otimes W_7$ and $\dim(W_7) = 1$, therefore condition \eqref{condB1} holds. So the family of Pryms yields a Shimura subvariety $ \overline{{\mathsf{P}_{6,2}}(\X_{12}(\tG, \tilde{\theta}))}$ of ${\mathsf{A}}_6$. 
	\end{example}
	
	The next  two examples do not satisfy condition \eqref{condB1} but we show that they satisfy condition \eqref{condB}. 
	
	\begin{example}
		$r=6$, $\tg = 5$, $g=3$, $b=0$. \\
		$\tG=G(8,5)=\ZZ/2\times \ZZ/2 \times \ZZ/2=\langle g_1 \rangle  \times \langle g_2 \rangle \times \langle g_3 \rangle,$\\
		$(\ttheta(\gamma_1),\dots,\ttheta(\gamma_6))=(g_1,g_1,g_2,g_2,g_3,g_3),  \quad\sigma=g_1g_2g_3$. \\
		
		Hence the matrix giving the monodromy is 
		$$A=  {\small \left( \begin{array}{cccccc}
			1&1&0&0&0&0\\
			0&0&1&1&0&0\\
			0&0&0&0&1&1\\
			\end{array} \right)},
		$$
		and the equations for the curves $\tC$ in the family are: 
		\begin{gather}
		w_1^{2}=(x-t_1)(x-t_2)\\
		w_2^{2}=(x-t_3)(x-t_4)\nonumber\\
		w_3^{2}=(x-t_5)(x-t_6)\nonumber
		\end{gather}
		The action of $\sigma$ is in this case given by $w_i\mapsto -w_i$ for $i=1,2,3$.
		One sees that $H^0(\tC,\omega_{\tC})_-=H^0(\tC,\omega_{\tC})_{-,(1,1,1)}=H^0(\tC,\omega_{\tC})_{(1,1,1)}$.
		By Lemma \ref{dimeigspace}, we know that $d_{(1,1,1)} = \dim H^0(\tC,\omega_{\tC})_{-,(1,1,1)}=2$. 
		Hence $\dim(S^2V_-)^G=3$.
		In order to show that the family gives rise to a 3-dimensional special subvariety in $A_5$, we show that condition \eqref{condB} holds.
		We have that
		\[ V_- = H^0(\tC,\omega_{\tC})_{(1,1,1)}=\langle\alpha_1=w_1w_2w_3\prod_{i=1}^{6}(x-t_i)^{-1}dx = \frac{dx}{w_1w_2w_3}, \alpha_2=x\alpha_1\rangle,\]
		so that $(S^2V_-)^G=\langle\alpha_1\odot\alpha_1,\alpha_1\odot\alpha_2,\alpha_2\odot\alpha_2\rangle$. 
		We have
		\begin{gather*}
		m(\alpha_1\odot\alpha_1)=\frac{(dx)^2}{\prod_{i=1}^{6}(x-t_i)}, \ 
		m(\alpha_1\odot\alpha_2)=\frac{x(dx)^2}{\prod_{i=1}^{6}(x-t_i)}, \
		m(\alpha_2\odot\alpha_2)=\frac{x^2(dx)^2}{\prod_{i=1}^{6}(x-t_i)}.
		\end{gather*}
		So $v=a_1(\alpha_1\odot\alpha_1)+a_2(\alpha_1\odot\alpha_2)+a_3(\alpha_2\odot\alpha_2)\in\ker(m)$
		if and only if
		\[a_1\frac{(dx)^2}{\prod_{i=1}^{6}(x-t_i)}+a_2\frac{x(dx)^2}{\prod_{i=1}^{6}(x-t_i)}+
		a_3\frac{x^2(dx)^2}{\prod_{i=1}^{6}(x-t_i)}=0.\]
		It is straightforward to see that this holds if and only if
		$a_1=a_2=a_3=0$. This shows that $m$ is injective and by condition
		\eqref{condA}, it is an isomorphism, so condition \eqref{condB} is satisfied. So we have shown that  $\overline{{\mathsf{P}_{3,0}}(\X_{5}(\tG, \tilde{\theta}))} \subset {\mathsf{A}}_2$ is a Shimura subvariety of dimension 3, hence it  coincides with ${\mathsf{A}}_2$. 
		
		Notice that $P(\tC, C) \sim JC'$, where $C' = \tC/\langle g_1g_2, g_1g_3 \rangle$ is a genus 2 curve whose equation as a double cover of $\PP^1$ via the map $C' \ra C'/H$, $H  = \tG/\langle g_1g_2, g_1g_3 \rangle$, is $(w_1w_2w_3)^2=\prod_{i=1}^{6}(x-t_i) $, so that $H^{1,0}(C') = \langle \alpha_1, \alpha_2, \alpha_3\rangle = V_-$. 
	\end{example}

	\begin{example}
		$r=9$, $\tg=11$, $g=6$, $b=0$.\\
		$\tG=G(8,5)=\ZZ/2\times \ZZ/2 \times \ZZ/2=\langle g_1 \rangle  \times \langle g_2 \rangle \times \langle g_3 \rangle,$\\
		$(\ttheta(\gamma_1),\dots,\ttheta(\gamma_9))=(g_1, \  g_1g_2g_3, \ g_3, \ g_2, \ g_2g_3, \ g_1g_2, \ g_2g_3, \ g_2, \ g_1), \quad\sigma=g_1g_3$.\\
		From the MAGMA script we know that $\dim(S^2V_-)^{\tG}= 6$, hence condition  \eqref{condA} is satisfied, but \eqref{condB1} is not. We will show that  condition \eqref{condB} holds. 
		
		Denote by $(t_1,\dots,t_9)$ the critical values of the covering $\psi: \tC \ra \tC/\tG \cong \PP^1$. One can easily show that $P(\tC, C) \sim E \times F \times Y \times JX$, where $E$, $F$, $Y$ have genus 1, while $X$ has genus 2. More precisely, the elliptic curve $E$ is the quotient $E = \tC/\langle g_1, g_2 \rangle$, and  if we set $H = \tG/\langle g_1, g_2 \rangle \cong \ZZ/2$, the double cover $E \ra E/H \cong \PP^1$ ramifies over $\{t_2,t_3, t_5, t_7\}$, hence $E$ has an equation given by $y^2 = (x-t_2)(x-t_3)(x-t_5)(x-t_7)$. The elliptic curve $F$ is the quotient $F = \tC/ \langle g_2, g_3 \rangle$ and the double cover $F \ra F/K \cong \PP^1$ where $K = \tG/ \langle g_2, g_3 \rangle$ has an equation given by $\eta^2 = (x-t_1)(x-t_2)(x-t_6)(x-t_9)$. The elliptic curve $Y$ is the quotient $Y= \tC/ \langle g_1, g_2g_3 \rangle$ and the double cover $Y \ra F/L \cong \PP^1$ where $L = \tG/ \langle g_1, g_2g_3 \rangle$ has an equation given by $w^2 = (x-t_3)(x-t_4)(x-t_6)(x-t_8)$. Finally the genus 2 curve $X$ is the quotient $X= \tC/ \langle g_1g_2, g_3 \rangle$ and the double cover $X \ra F/N \cong \PP^1$ where $N = \tG/ \langle g_1g_2, g_3 \rangle$ has an equation given by $\theta^2 = (x-t_1)(x-t_4)(x-t_5)(x-t_7)(x-t_8)(x-t_9)$. 
		Hence we have $(S^2V_-)^{\tG} \cong S^2 H^{1,0}(E) \oplus S^2 H^{1,0}(F) \oplus S^2 H^{1,0}(Y) \oplus S^2 H^{1,0}(X)$. 
		
		The matrix giving the monodromy is the following 
		$$A=  {\small \left( \begin{array}{ccccccccc}
			1&1&0&0&0&1&0&0&1\\
			0&1&0&1&1&1&1&1&0\\
			0&1&1&0&1&0&1&0&0\\
			\end{array} \right)},
		$$
		so the equations of $\tC$ are 
		\begin{gather*}
		w_1^2 = (x-t_1)(x-t_2)(x-t_6)(x-t_9), \\
		w_2^2 = (x-t_2)(x-t_4)(x-t_5)(x-t_6)(x-t_7)(x-t_8),\\
		w_3^2 = (x-t_2)(x-t_3)(x-t_5)(x-t_7).
		\end{gather*}
		
		Notice that the first equation gives the elliptic curve $F$, and the third one gives $E$. 
		The action of $\sigma = g_1g_3$ is the following $\sigma(w_1) = -w_1$, $\sigma(w_2) = w_2$, $\sigma(w_3) = -w_3$.  One immediately computes $d_{g_1} = d_{(1,0,0)} = 1$ and $\alpha_1:= \omega_{g_1,0} = \frac{dx}{w_1} $ is a generator of $ H^{1,0}(F)$. We have: $d_{g_3} = d_{(0,0,1)} = 1$ and $\alpha_2:= \omega_{g_3,0} = \frac{dx}{w_3} $ is a generator of $ H^{1,0}(E)$. We have $d_{g_1g_2} = d_{(1,1,0)} = 2$ and $\alpha_3:= \omega_{g_1g_2,0} = \frac{(x-t_2)(x-t_6)dx}{w_1w_2} $, $\alpha_4:= \omega_{g_1g_2,1} = \frac{x(x-t_2)(x-t_6)dx}{w_1w_2}  = x \alpha_3$. Moreover $ H^{1,0}(X) = \langle \alpha_3, \alpha_4\rangle$, since if we set $\theta:= \frac{w_1 w_2}{(x-t_2)(x-t_6)}$, we have $\theta^2 = (x-t_1)(x-t_4)(x-t_5)(x-t_7)(x-t_8)(x-t_9)$, hence $\langle \alpha_3 = \frac{dx}{\theta},$ $\alpha_4 = \frac{x dx}{\theta} \rangle =  H^{1,0}(X)$. 
		Finally one computes $d_{g_2g_3} = d_{(0,1,1)} = 1$ and $\alpha_5:= \omega_{g_2g_3,0} = \frac{(x-t_2)(x-t_5)(x-t_7)dx}{w_2w_3} $ is a generator of $ H^{1,0}(Y)$, since $w:= \frac{w_2w_3}{(x-t_2)(x-t_5)(x-t_7)} $ satisfies $w^2 = (x-t_3)(x-t_4)(x-t_6)(x-t_8)$ and $ H^{1,0}(Y) = \langle \frac{dx}{w}= \alpha_5 \rangle$. 
		
		Therefore we have 
		\begin{equation}
		(S^2V_-)^{\tG} = \langle \alpha_1 \odot \alpha_1, \alpha_2 \odot \alpha_2, \alpha_3 \odot \alpha_3, \alpha_3 \odot \alpha_4, \alpha_4 \odot \alpha_4, \alpha_5 \odot \alpha_5 \rangle
		\end{equation}
		We have 
		\begin{gather*}
		m(\alpha_1 \odot \alpha_1) = \alpha_1^2  = \frac{(dx)^2}{w_1^2} = \frac{(dx)^2}{(x-t_1)(x-t_2)(x-t_6)(x-t_9)},\\
		m(\alpha_2 \odot \alpha_2) = \alpha_2^2= \frac{(dx)^2}{w_3^2} = \frac{(dx)^2}{(x-t_2)(x-t_3)(x-t_5)(x-t_7)},\\
		m(\alpha_3 \odot \alpha_3) = \alpha_3^2  =\frac{(x-t_2)^2(x-t_6)^2(dx)^2}{w_1^2w_2^2}= \frac{(dx)^2}{(x-t_1)(x-t_4)(x-t_5)(x-t_7)(x-t_8)(x-t_9)},\\
		m(\alpha_3 \odot \alpha_4) = \alpha_3\alpha_4= x m(\alpha_3 \odot \alpha_3), \\
		m(\alpha_4 \odot \alpha_4) = \alpha_4^2 =x^2 m(\alpha_3 \odot \alpha_3), \\
		m(\alpha_5 \odot \alpha_5) =  \alpha_5^2 = \frac{(x-t_2)^2(x-t_5)^2(x-t_7)^2(dx)^2}{w_2^2w_3^2}= \frac{(dx)^2}{(x-t_3)(x-t_4)(x-t_6)(x-t_8)}.
		\end{gather*}
		So, $v = a_1 (\alpha_1 \odot \alpha_1) + a_2 (\alpha_2 \odot \alpha_2) + a_3  (\alpha_3 \odot \alpha_3) + a_4  (\alpha_3 \odot \alpha_4) + a_5 (\alpha_4 \odot \alpha_4) + a_6 (\alpha_5 \odot \alpha_5) \in \ker(m)$ if and only if $a_1 \alpha_1^2+ a_2 \alpha_2^2 + a_3 \alpha_3^2+ a_4  \alpha_3 \alpha_4 + a_5 \alpha_4^2 + a_6 \alpha_5^2=0$ and this holds if and only if 
		$$a_1(x-t_3)(x-t_4)(x-t_5)(x-t_7)(x-t_8) + a_2(x-t_1)(x-t_4)(x-t_6)(x-t_8)(x-t_9) + $$
		$$a_3(x-t_2)(x-t_3)(x-t_6) + a_4 x   (x-t_2)(x-t_3)(x-t_6) + a_5 x^2 (x-t_2)(x-t_3)(x-t_6) + $$
		$$a_6 (x-t_1)(x-t_2)(x-t_5)(x-t_7)(x-t_9) = 0,$$
		and one can easily show that the only solution is $a_i = 0$, $\forall i =1,...,6$. This proves that $m$ is injective, hence by condition \eqref{condA}, it is an isomorphism, so condition \eqref{condB} is satisfied.
	\end{example}

	Now we describe an example with non abelian group $\tG$ that does not satisfy \eqref{condB1}, and we show that it gives a 2-dimensional Shimura subvariety of ${\mathsf{A}}_{12}$ using \cite [Thm. 3.8]{fgs}. \\
	\begin{example}
		$r=5$, $\tg=25$, $g=13$, $b=0$.\\
		$\tG=G(48,32)=\ZZ/2\times SL(2,3)=\sx{g_1|g_1^2=1}\xs\times \langle g_2,g_3,g_4, g_5 \ | \ g_1^2=g_2^3 =1, \ g_3^2=g_4^2=g_5, \ g_5^2 =1,$
		
		$g_2^{-1}g_3g_2=g_4, \ g_2^{-1}g_4g_2=g_3g_4, \ g_3^{-1}g_4g_3=g_4g_5 \rangle$,\\
		$(\ttheta(\gamma_1)=g_1g_5,\ \ttheta(\gamma_2)=g_1,\ \ttheta(\gamma_3)=g_2g_4,\ \ttheta(\gamma_4)=g_2g_3g_4, \ttheta(\gamma_5)=g_2), \quad\sigma=g_5$.\\
		Assume that the critical values of the map $\psi: \tC \ra \tC/\tG \cong \PP^1$ are $(t_1, t_2, 0, 1,\infty)$. 
		$V_+=V_4\oplus V_5\oplus 2V_6\oplus 3V_{13}$ and $V_-=2V_{7}\oplus 2V_{8} \oplus V_9 \oplus V_{11}$, where $\dim V_i=1$, $\forall i <7$, $ \dim V_7 = \dim V_{8} = \dim V_{9 } = \dim V_{11} =2$, $\dim V_{13} =3$ . 
		$(S^2V_-)^\tG\cong \Lambda^2V_7 \oplus  \Lambda^2V_8$ has dimension 2. Thus condition \eqref{condA} is satisfied, but \eqref{condB1} is not.
		We want to show that the family yields a Shimura subvariety of $\A_{12}$ of dimension 2. 
		Notice that the centre $Z$ of $\tG$ is the subgroup $Z= \langle g_1, g_5 \rangle$, and $V_{4}=\Fix\sx{g_1, g_5}\xs$, hence $E:= \tC/ Z$ is a genus 1 curve and it  gives a  degree 12 Galois cover $\phi: E \ra  E/L = \tC/\tG \cong {\mathbb P}^1$, where $L= \tG/ Z$ with $H^{1,0}(E) = V_4$.  It is immediate to check that the  Galois cover $\phi$ only ramifies over $(0,1, \infty)$, hence the elliptic curve $E$ does not move. 

		We also have $\Fix\sx{g_1}\xs = V_4 \oplus 2V_7 \oplus V_{11}$, $\Fix\sx{g_1g_5} \xs= V_4 \oplus 2V_8 \oplus V_{9}$, hence if we set $C' = \tC/\langle g_1 \rangle$, $D'  = \tC/\langle g_1g_5 \rangle$, we have $H^{1,0} (C') = V_4 \oplus 2V_7 \oplus V_{11}$, $H^{1,0} (D')= V_4 \oplus 2V_8 \oplus V_{9}$. Both $\langle g_1 \rangle$ and $\langle g_1g_5 \rangle$ are normal subgroups and both quotients $H:= \tG/\langle g_1 \rangle$ and $K:= \tG/\langle g_1g_5\rangle$ are isomorphic to $SL(2,3)$.  So we have two $SL(2,3)$-Galois covers $\alpha: C' \ra C'/H= \tC/\tG \cong \PP^1$, $\beta: D' \ra C'/K = \tC/\tG \cong \PP^1$ and a commutative diagram

		\[\begin{tikzcd}
		&\ar[ld]\tC\ar[rd]\ar[dd]\\
		\tC/\sx{g_1}\xs=C'\ar[rd]\ar[rdd, "\alpha"]&&\tC/\sx{g_1g_5}\xs=D'\ar[ld]\ar[ldd, "\beta"]\\
		&\tC/Z = E \ar[d]\\
		&\tC/\tG = \PP^1\\
		\end{tikzcd}\]
		
		The two families of Galois covers $\alpha: C'_{t_1} \ra C'_{t_1}/H \cong \PP^1$  and $\beta: D'_ {t_2} \ra D'_{t_2} /K \cong \PP^1$ yield Shimura curves. In fact they both satisfy condition $(*)$ of \cite{fgp} since $(S^2 H^0(K_{C'}))^H \cong \Lambda^2 V_7$ and  $(S^2 H^0(K_{D'}))^K \cong \Lambda^2 V_8$, which are both one dimensional, and they coincide with family $(40)$ of \cite{fgp}. The critical values of $\alpha$ and $\beta$ are respectively $(t_1, 0, 1, \infty)$, $(t_2, 0, 1, \infty)$ with the same monodromy. We have isogenies $JC'_{t_1} \sim E \times P(C'_{t_1},E)$, $JD'_{t_2} \sim E \times P(D'_{t_2},E)$, where $E$ is fixed. Therefore the one dimensional families of Pryms $P(C'_{t_1},E)$ and $P(D'_{t_2},E)$ yield Shimura curves in the moduli space of polarised abelian varieties $\A_6^{\delta}$. Denote by $W_1 := \overline{{\mathsf{P}_{13,0}}(\X_{25}(\tG, \tilde{\theta}))}$ and consider the two dimensional Shimura subvariety $W_2$ of $\A_6^{\delta} \times \A_6^{\delta} \subset \A_{12}^{\delta'}$ given by the family $P(C'_{t_1},E) \times P(D'_{t_2},E)$. For any $(t_1, t_2) \in  (\PP^1\setminus \{0,1, \infty\}) \times (\PP^1\setminus \{0,1, \infty\})$ with $t_1 \neq t_2$, we have an isogeny  $P(\tC_{(t_1, t_2)}, C_{(t_1, t_2)}) \sim P(C'_{t_1},E) \times P(D'_{t_2},E)$. Hence  there exists an open subset $U$ of $W_2$ such that for any  abelian variety $A \in W_2$ there exists an abelian variety $B \in W_1$ such that $A$ is isogenous to $B$. So, applying  \cite [Thm. 3.8]{fgs} we see that $2= \dim (W_2) \leq \dim(W_1) \leq 2$, therefore $\dim(W_1) =2$ and  by  \cite [Thm. 3.8]{fgs} it is also totally geodesic, since $W_2$ is totally geodesic.  Moreover $W_2$ is  Shimura, hence there exist infinitely many $t_1$ and   $t_2$ such that both $P(C'_{t_1},E)$  and $P(D'_{t_2},E)$ have complex multiplication. So also the family of Pryms $P(\tC_{(t_1, t_2)}, C_{(t_1, t_2)})$ has a CM point, hence $W_1$ is a Shimura subvariety of $\A_{12}$ of dimension 2. 
	\end{example}\\
	
	Finally we give an example satisfying condition \eqref{condA} but not condition \eqref{condB} and we show that it yields a variety which is not totally geodesic, hence it is not Shimura. \\
	
	\begin{example}
		\label{A-not-B}%
		$r =10$, $\tg=7$, $g=3$, $b=4$.\\
		$\tG=G(4,2)=\ZZ/2\times \ZZ/2 =\langle g_1 \rangle  \times \langle g_2 \rangle,$\\
		$(\ttheta(\gamma_1),\dots,\ttheta(\gamma_{10}))=(g_1, \  g_1, \ g_1g_2, \ g_2, \ g_1, \ g_1, \ g_1, \ g_1g_2, \ g_2, g_1), \quad\sigma=g_1g_2$.\\
		From the MAGMA script we know that $\dim(S^2V_-)^{\tG}= 7$, hence condition  \eqref{condA} is satisfied, but \eqref{condB1} is not. We will show that  condition \eqref{condB} does not hold and that the family is not totally geodesic, hence it is not Shimura. 
		
		Denote by $(t_1,\dots,t_{10})$ the critical values of the covering $\psi: \tC \ra \tC/\tG \cong \PP^1$. One can easily show that $P(\tC, C) \sim E \times JC' $, where $C'$ has genus 3, while $E$ has genus 1. More precisely, the genus 3 curve $C'$ is the quotient $C'= \tC/ \langle g_2\rangle$ and the double cover $C' \ra C'/N \cong \PP^1$ where $N = \tG/ \langle g_2 \rangle$ has an equation given by $w_1^2 = (x-t_1)(x-t_2)(x-t_3)(x-t_5)(x-t_6)(x-t_7)(x-t_8)(x-t_9)$. The elliptic curve $E$ is the quotient $E = \tC/\langle g_1 \rangle$, and  if we set $H = \tG/\langle g_1\rangle \cong \ZZ/2$, the double cover $E \ra E/H \cong \PP^1$ ramifies over $\{t_3,t_4, t_8, t_9\}$, hence $E$ has an equation given by $w_2^2 = (x-t_3)(x-t_4)(x-t_8)(x-t_9)$. 
		Therefore we have $(S^2V_-)^{\tG} \cong S^2 H^{1,0}(E) \oplus S^2 H^{1,0}(C')$ and it has dimension 7.  
		
		The matrix giving the monodromy is the following 
		$$A=  {\small \left( \begin{array}{cccccccccc}
			1&1&1&0&1&1&1&1&0&1\\
			0&0&1&1&0&0&0&1&1&0\\
			\end{array} \right)},
		$$
		so the equations of $\tC$ are 
		\begin{gather*}
		w_1^2 = (x-t_1)(x-t_2)(x-t_3)(x-t_5)(x-t_6)(x-t_7)(x-t_8)(x-t_{10}), \\
		w_2^2 = (x-t_3)(x-t_4)(x-t_8)(x-t_9).
		\end{gather*}
		
		Notice that the first equation gives the curve $C'$, while the second one gives $E$. 
		
		The action of $\sigma = g_1g_2$ is the following $\sigma(w_1) = -w_1$, $\sigma(w_2) = -w_2$.  One immediately computes $d_{g_2} = d_{(0,1)} = 1$ and $\alpha_1:= \omega_{g_1,0} = \frac{dx}{w_2} $ is a generator of $ H^{1,0}(E)$. We have: $d_{g_1} = d_{(1,0)} = 3$ and $\{\alpha_2:= \omega_{g_1,0} = \frac{dx}{w_1}, \alpha_3:= \omega_{g_1,1} = \frac{xdx}{w_1}, \alpha_4:= \omega_{g_1,2} = \frac{x^2dx}{w_1}\}$ is a basis of $ H^{1,0}(C')$.
		Therefore we have 
		\begin{equation}
		(S^2V_-)^{\tG} = \langle \alpha_1 \odot \alpha_1, \alpha_2 \odot \alpha_2, \alpha_2 \odot \alpha_3,  \alpha_2 \odot \alpha_4, \alpha_3 \odot \alpha_3, \alpha_3 \odot \alpha_4, \alpha_4 \odot \alpha_4 \rangle
		\end{equation}
		Moreover, 
		\begin{gather*}
		m(\alpha_1 \odot \alpha_1) = \alpha_1^2  = \frac{(dx)^2}{w_2^2} = \frac{(dx)^2}{(x-t_3)(x-t_4)(x-t_8)(x-t_9)}, \\
		m(\alpha_2 \odot \alpha_2) =  \alpha_2^2 = \frac{(dx)^2}{(x-t_1)(x-t_2)(x-t_3)(x-t_5)(x-t_6)(x-t_7)(x-t_8)(x-t_{10})},\\
		m(\alpha_2 \odot \alpha_3) = \alpha_2\alpha_3= x\alpha_2^2, \
		m(\alpha_2 \odot \alpha_4) = \alpha_2 \alpha_4 =x^2\alpha_2^2,\
		m(\alpha_3 \odot \alpha_3) = \alpha_3^2 =x^2\alpha_2^2,\\
		m(\alpha_3 \odot \alpha_4) = \alpha_3 \alpha_4 =x^3\alpha_2^2,\
		m(\alpha_4 \odot \alpha_4) = \alpha_4^2 =x^4\alpha_2^2.
		\end{gather*}
		So, $Q = a_1 (\alpha_1 \odot \alpha_1) + a_2 (\alpha_2 \odot \alpha_2) + a_3  (\alpha_2 \odot \alpha_3) + a_4  (\alpha_2 \odot \alpha_4) + a_5 (\alpha_3 \odot \alpha_3) + a_6 (\alpha_3\odot \alpha_4) + a_7(\alpha_4 \odot \alpha_4) \in \ker(m)$ if and only if $a_1 \alpha_1^2+ a_2 \alpha_2^2 + a_3 \alpha_2\alpha_3+ a_4  \alpha_2\alpha_4  + a_5 \alpha_3^2 + a_6 \alpha_3\alpha_4 + a_7 \alpha_4^2=0$ and this holds if and only if 
		$$a_1(x-t_1)(x-t_2)(x-t_5)(x-t_6)(x-t_7)(x-t_{10}) + (a_2 + a_3 x + (a_4 + a_5)x^2 + a_6 x^3 + a_7 x^4)(x-t_4)(x-t_9)  = 0. $$
		One can verify that this holds if and only if $a_1=a_2=a_3=a_6=a_7 =0, a_5 = -a_4$, hence 
		$$\ker(m) = \langle Q=\alpha_2 \odot \alpha_4 -\alpha_3 \odot \alpha_3 \rangle,$$
		so condition \eqref{condB} is not satisfied. 
		
		Notice that since $m$ has rank 6, the variety $\overline{{\mathsf{P}_{3,4}}(\X_{7}(\tG, \tilde{\theta}))}=:W_1$ has dimension 6. 
		
		Now we show that  $W_1 \subset {\mathsf{A}}_4^{\delta}$ is not totally geodesic. 
		
		Consider the subvariety $W_2$ of ${\mathsf{A}}_4$ given by ${\mathsf{A}}_1 \times \overline{j(HE_3)}$, where $HE_3$ denotes the hyperelliptic locus in ${\mathsf{M}}_3$ and $j: {\mathsf{M}}_3 \ra {\mathsf{A}}_3$ is the Torelli map. We have shown that there exists an open subset $U$ of $W_1$ such that for every abelian variety  $A= P(\tC,C) \in U$, there exists an abelian variety $B \in W_2$ such that $A$ is isogenous to $B=E \times JC'$.  Since $W_1$ and $W_2$ have the same dimension, \cite [Thm. 3.8]{fgs} implies that $W_1$ is totally geodesic in $ {\mathsf{A}}_4^{\delta}$ if and only if $W_2$ is totally geodesic in ${\mathsf{A}}_4$. 
		So, assume by contradiction that $W_1$ is totally geodesic in $ {\mathsf{A}}_4^{\delta}$, then $W_2 \subset {\mathsf{A}}_1 \times {\mathsf{A}}_3 \subset {\mathsf{A}}_4$ would be totally geodesic in ${\mathsf{A}}_4$, hence it  also would be  totally geodesic in ${\mathsf{A}}_1 \times {\mathsf{A}}_3$, since ${\mathsf{A}}_1 \times {\mathsf{A}}_3$ is totally geodesic in ${\mathsf{A}}_4 $. This would imply that $\overline{j(HE_3)}$ is totally geodesic in ${\mathsf{A}}_3$, which is not true, as one can see applying \cite [Corollary 5.14]{fgp}, since the hyperelliptic locus has codimension 1 in ${\mathsf{M}}_3$.    
		
		Another way to prove that $\overline{j(HE_3)}$ is not totally geodesic in ${\mathsf{A}}_3$ is via a direct computation of the second fundamental form of the Torelli map that can be done as in  \cite [Proposition 5.4]{cfg}. In fact, with the notation of  \cite [Proposition 5.4]{cfg} one can show that $\rho(Q)(v \odot v) \neq 0$, where $v$ is a tangent vector to $W_2$ at $JC'$ given by the sum of two Schiffer variations $v=\xi_p + \xi_q$, at two general  distinct points $p, q \in  C'$ that are exchanged by the hyperelliptic involution. 
		
		Hence we have proven that $W_1= \overline{{\mathsf{P}_{3,4}}(\X_{7}(\tG, \tilde{\theta}))}$ is not totally geodesic in  ${\mathsf{A}}_4^{\delta}$, therefore it is not Shimura.


	\end{example}

	\section*{Appendix}
	
	The  \MAGMA\  script used to find the examples is available at:

\centerline{\texttt{http://www-dimat.unipv.it/grosselli/publ/}}

\noindent and it is described in the Appendix in \cite{fg}. The script tests condition \eqref{condA}, next if either \eqref{condB1} is met or $b\ge6$ then condition \eqref{condB} holds.
The method described in Section~\ref{Abelian covers of line} is used to test \eqref{condB} for many specific abelian cases. In particular in the cases with $r=8,9$ the families satisfying condition  \eqref{condB} for $\tg \leq 20$ and $\tG$ abelian are exactly the ones listed in the Table. 

	The following table lists all the calculated cases in which condition \eqref{condB} holds. 
The data are listed sorted by $r$, $\tg$ and $g$ (and thus $b$ and $p = \tg-g$), then a progressive index (under column $\#$) allows to distinguish cases with the same values. 
	For each case it is reported the group $\tG$ and the id of $\tG$ and $G$ as small group in the \MAGMA\ Database. A check mark tells if the conditions are met.
	Different examples with same data are grouped in the same row.


\def\midline{\addlinespace[0mm]\hline\addlinespace[0.379mm]}
\def\cmidline{\addlinespace[0mm]\cline{2-12}\addlinespace[0.379mm]}
\def\testa{\begin{table}[H]
\begin{tabular}{*{12}{c}}
\toprule
$r$&$\tg$&$g$&$b$&$p$&$\#$&$\tG$&$\tG$ \texttt{Id}&$G$ \texttt{Id}&\eqref{condB1}&$b\ge6$&\eqref{condB}\\ 
\midrule}
\def\coda{\bottomrule\end{tabular}\end{table}}
\def\interruzione{\coda \newpage \testa}

\testa

5 & 2 & 0 & 6 & 2 & 1 &$C_2^2$& $G(4,2)$ & $G(2, 1)$ & \phantom{X} & \checkmark & \checkmark \\ 
5 & 3 & 0 & 8 & 3 & 1 &$C_4$& $G(4,1)$ & $G(2, 1)$ & \checkmark & \checkmark & \checkmark \\ 
5 & 4 & 1 & 6 & 3 & 1 &$C_4$& $G(4,1)$ & $G(2, 1)$ & \phantom{X} & \checkmark & \checkmark \\ 
5 & 4 & 1 & 6 & 3 & 2 &$C_6$& $G(6,2)$ & $G(3, 1)$ & \checkmark & \checkmark & \checkmark \\ 
5 & 4 & 1 & 6 & 3 & 3 &$C_3\rtimes C_4$& $G(12,4)$ & $G(6, 1)$ & \phantom{X} & \checkmark & \checkmark \\ 
5 & 5 & 2 & 4 & 3 & 1 &$C_6$& $G(6,2)$ & $G(3, 1)$ & \checkmark & \phantom{X} & \checkmark \\ 
5 & 5 & 2 & 4 & 3 & 2 &$C_2\times C_4$& $G(8,2)$ & $G(4, 2)$ & \checkmark & \phantom{X} & \checkmark \\ 
5 & 5 & 1 & 8 & 4 & 1 &$C_2\times C_4$& $G(8,2)$ & $G(4, 2)$ & \phantom{X} & \checkmark & \checkmark \\ 
5 & 5 & 1 & 8 & 4 & 2 &$C_2\times C_4$& $G(8,2)$ & $G(4, 2)$ & \checkmark & \checkmark & \checkmark \\ 
5 & 5 & 1 & 8 & 4 & 3 &$D_8$& $G(16,7)$ & $G(8, 3)$ & \phantom{X} & \checkmark & \checkmark \\ 
5 & 5 & 1 & 8 & 4 & 4 &$C_2\times D_4$& $G(16,11)$ & $G(8, 5)$ & \phantom{X} & \checkmark & \checkmark \\ 
5 & 6 & 3 & 2 & 3 & 1,2 &$C_6$& $G(6,2)$ & $G(3, 1)$ & \checkmark & \phantom{X} & \checkmark \\ 
5 & 6 & 2 & 6 & 4 & 1,2,3 &$C_2\times C_6$& $G(12,5)$ & $G(6, 2)$ & \phantom{X} & \checkmark & \checkmark \\ 
5 & 7 & 4 & 0 & 3 & 1,2 &$C_2\times C_4$& $G(8,2)$ & $G(4, 1)$ & \checkmark & \phantom{X} & \checkmark \\ 
5 & 7 & 1 & 12 & 6 & 1 &$C_4\circ D_4$& $G(16,13)$ & $G(8, 5)$ & \phantom{X} & \checkmark & \checkmark \\ 
5 & 7 & 2 & 8 & 5 & 1 &$C_2\times C_4$& $G(8,2)$ & $G(4, 2)$ & \checkmark & \checkmark & \checkmark \\ 
5 & 8 & 4 & 2 & 4 & 2 &$C_2\times C_6$& $G(12, 5)$ & $G(6, 2)$ & \phantom{X} & \phantom{X} & \checkmark \\ 
5 & 9 & 5 & 0 & 4 & 1 &$C_2\times C_6$& $G(12,5)$ & $G(6, 2)$ & \checkmark & \phantom{X} & \checkmark \\ 
5 & 9 & 5 & 0 & 4 & 16 &$C_2^2\times C_4$& $G(16,10)$ & $G(8, 2)$ & \checkmark & \phantom{X} & \checkmark \\ 
5 & 9 & 3 & 8 & 6 & 1 &$C_8$& $G(8,1)$ & $G(4, 1)$ & \checkmark & \checkmark & \checkmark \\ 
5 & 9 & 3 & 8 & 6 & 2 &$C_8$& $G(8,1)$ & $G(4, 1)$ & \phantom{X} & \checkmark & \checkmark \\ 
5 & 9 & 3 & 8 & 6 & 3 &$C_2^2\times C_4$& $G(16,10)$ & $G(8, 5)$ & \checkmark & \checkmark & \checkmark \\ 
5 & 9 & 3 & 8 & 6 & 4 &$C_2^2\times C_4$& $G(16,10)$ & $G(8, 5)$ & \phantom{X} & \checkmark & \checkmark \\ 
5 & 9 & 3 & 8 & 6 & 5 &$C_4\circ D_4$& $G(16,13)$ & $G(8, 5)$ & \phantom{X} & \checkmark & \checkmark \\ 
5 & 10 & 5 & 2 & 5 & 1 &$C_2\times C_6$& $G(12, 5)$ & $G(6, 2)$ & \phantom{X} & \phantom{X} & \checkmark \\ 
5 & 10 & 5 & 2 & 5 & 2,3 &$C_2\times C_6$& $G(12,5)$ & $G(6, 2)$ & \checkmark & \phantom{X} & \checkmark \\ 
5 & 10 & 4 & 6 & 6 & 1 &$C_8$& $G(8,1)$ & $G(4, 1)$ & \checkmark & \checkmark & \checkmark \\ 
5 & 10 & 4 & 6 & 6 & 2 &$C_2\times C_6$& $G(12,5)$ & $G(6, 2)$ & \phantom{X} & \checkmark & \checkmark \\ 
5 & 10 & 4 & 6 & 6 & 3 &$Q_8\rtimes C_2$& $G(16,8)$ & $G(8, 3)$ & \phantom{X} & \checkmark & \checkmark \\ 
5 & 10 & 4 & 6 & 6 & 4 &$C_3\rtimes D_4$& $G(24,8)$ & $G(12, 4)$ & \phantom{X} & \checkmark & \checkmark \\ 
5 & 11 & 3 & 12 & 8 & 1 &$C_2\times Q_8$& $G(16,12)$ & $G(8, 5)$ & \phantom{X} & \checkmark & \checkmark \\ 
5 & 11 & 4 & 8 & 7 & 1 &$C_4\rtimes C_4$& $G(16,4)$ & $G(8, 3)$ & \phantom{X} & \checkmark & \checkmark \\ 
5 & 12 & 6 & 2 & 6 & 1,2 &$C_{10}$&$G(10,2)$ & $G(5, 1)$ & \checkmark & \phantom{X} & \checkmark \\ 
5 & 13 & 7 & 0 & 6 & 3 &$C_4^2$& $G(16, 2)$ & $G(8, 2)$ & \phantom{X} & \phantom{X} & \checkmark \\ 
5 & 13 & 7 & 0 & 6 & 4 &$C_4^2$&$G(16,2)$ & $G(8, 2)$ & \checkmark & \phantom{X} & \checkmark \\ 
5 & 13 & 5 & 8 & 8 & 1 &$C_2\times C_8$&$G(16,5)$ & $G(8, 2)$ & \phantom{X} & \checkmark & \checkmark \\ 
5 & 13 & 5 & 8 & 8 & 2 &$C_4\circ D_8$&$G(32,42)$ & $G(16, 11)$ & \phantom{X} & \checkmark & \checkmark \\ 
5 & 13 & 5 & 8 & 8 & 3,4 &$C_2\times C_4\circ D_4$&$G(32,48)$ & $G(16, 14)$ & \phantom{X} & \checkmark & \checkmark \\ 
5 & 15 & 5 & 12 & 10 & 1 &$C_6\rtimes C_4$&$G(24,7)$ & $G(12, 4)$ & \phantom{X} & \checkmark & \checkmark \\ 
5 & 15 & 7 & 4 & 8 & 2 &$C_{12}$&$G(12,2)$ & $G(6, 2)$ & \checkmark & \phantom{X} & \checkmark \\ 
5 & 16 & 8 & 2 & 8 & 1 &$C_2\times C_{10}$& $G(20, 5)$ & $G(10, 2)$ & \phantom{X} & \phantom{X} & \checkmark \\ 
5 & 21 & 9 & 8 & 12 & 1 &$C_8\circ D_4$&$G(32,38)$ & $G(16, 10)$ & \phantom{X} & \checkmark & \checkmark \\ 
5 & 29 & 13 & 8 & 16 & 1 &$Q_8\circ D_8$&$G(64,259)$ & $G(32, 46)$ & \phantom{X} & \checkmark & \checkmark \\ 

\interruzione

6 & 2 & 0 & 6 & 2 & 1 &$C_2$&$G(2,1)$ & $G(1, 1)$ & \phantom{X} & \checkmark & \checkmark \\ 
6 & 3 & 1 & 4 & 2 & 1 &$C_2^2$& $G(4, 2)$ & $G(2, 1)$ & \phantom{X} & \phantom{X} & \checkmark \\ 
6 & 5 & 3 & 0 & 2 & 1 &$C_2^3$& $G(8, 5)$ & $G(4, 2)$ & \phantom{X} & \phantom{X} & \checkmark \\ 
6 & 5 & 1 & 8 & 4 & 1 &$C_4$& $G(4,1)$ & $G(2, 1)$ & \checkmark & \checkmark & \checkmark \\ 
6 & 5 & 1 & 8 & 4 & 2 &$D_4$&$G(8,3)$ & $G(4, 2)$ & \phantom{X} & \checkmark & \checkmark \\ 
6 & 6 & 2 & 6 & 4 & 1 &$C_4$& $G(4,1)$ & $G(2, 1)$ & \phantom{X} & \checkmark & \checkmark \\ 
6 & 6 & 2 & 6 & 4 & 2 &$D_4$&$G(8,3)$ & $G(4, 2)$ & \phantom{X} & \checkmark & \checkmark \\ 
6 & 7 & 2 & 8 & 5 & 1 &$C_2\times C_4$& $G(8,2)$ & $G(4, 2)$ & \phantom{X} & \checkmark & \checkmark \\ 
6 & 8 & 4 & 2 & 4 & 1 &$C_6$& $G(6,2)$ & $G(3, 1)$ & \checkmark & \phantom{X} & \checkmark \\ 
6 & 9 & 5 & 0 & 4 & 3 &$C_2\times C_4$& $G(8,2)$ & $G(4, 1)$ & \checkmark & \phantom{X} & \checkmark \\ 
6 & 9 & 3 & 8 & 6 & 1 &$C_2\times C_4$& $G(8,2)$ & $G(4, 2)$ & \phantom{X} & \checkmark & \checkmark \\ 
6 & 10 & 4 & 6 & 6 & 1 &$C_6$& $G(6,2)$ & $G(3, 1)$ & \phantom{X} & \checkmark & \checkmark \\ 
6 & 13 & 7 & 0 & 6 & 1 &$C_2\times C_6$& $G(12, 5)$ & $G(6, 2)$ & \phantom{X} & \phantom{X} & \checkmark \\ 
6 & 13 & 5 & 8 & 8 & 1 &$C_4\circ D_4$& $G(16,13)$ & $G(8, 5)$ & \phantom{X} & \checkmark & \checkmark \\ 
6 & 14 & 6 & 6 & 8 & 1 &$C_8$& $G(8,1)$ & $G(4, 1)$ & \phantom{X} & \checkmark & \checkmark \\ 
6 & 17 & 5 & 16 & 12 & 1 &$D_4\circ Q_8$&$G(32,50)$ & $G(16, 14)$ & \phantom{X} & \checkmark & \checkmark \\ 
\midline
7 & 4 & 1 & 6 & 3 & 1,2 &$C_2^2$& $G(4,2)$ & $G(2, 1)$ & \phantom{X} & \checkmark & \checkmark \\ 
7 & 7 & 2 & 8 & 5 & 1 &$C_4$& $G(4,1)$ & $G(2, 1)$ & \checkmark & \checkmark & \checkmark \\ 
7 & 9 & 4 & 4 & 5 & 2 &$C_2\times C_4$& $G(8, 2)$ & $G(4, 1)$ & \phantom{X} & \phantom{X} & \checkmark \\ 
7 & 9 & 3 & 8 & 6 & 1 &$C_2\times C_4$& $G(8,2)$ & $G(4, 2)$ & \phantom{X} & \checkmark & \checkmark \\ 
7 & 10 & 5 & 2 & 5 & 1 &$C_6$& $G(6,2)$ & $G(3, 1)$ & \checkmark & \phantom{X} & \checkmark \\ 
7 & 11 & 6 & 0 & 5 & 1,2,3 &$C_2\times C_4$& $G(8, 2)$ & $G(4, 1)$ & \phantom{X} & \phantom{X} & \checkmark \\ 
7 & 13 & 7 & 0 & 6 & 2 &$C_2\times C_6$& $G(12,5)$ & $G(6, 2)$ & \phantom{X} & \phantom{X} & \checkmark \\ 
7 & 13 & 5 & 8 & 8 & 1 &$C_4\circ D_4$& $G(16,13)$ & $G(8, 5)$ & \phantom{X} & \checkmark & \checkmark \\ 
\midline
8 & 9 & 5 & 0 & 4 & 3--9 &$C_2^3$& $G(8, 5)$ & $G(4, 2)$ & \phantom{X} & \phantom{X} & \checkmark \\
8 & 9 & 3 & 8 & 6 & 1 &$C_4$& $G(4,1)$ & $G(2, 1)$ & \phantom{X} & \checkmark & \checkmark \\ 
8 & 13 & 7 & 0 & 6 & 1 &$C_2\times C_4$& $G(8, 2)$ & $G(4, 1)$ & \phantom{X} & \phantom{X} & \checkmark \\ 
\midline
9 & 6 & 2 & 6 & 4 & 1,2,3 &$C_2^2$& $G(4,2)$ & $G(2, 1)$ & \phantom{X} & \checkmark & \checkmark \\ 
9 & 11 & 6 & 0 & 5 & 2,3,4 &$C_2^3$& $G(8, 5)$ & $G(4, 2)$ & \phantom{X} & \phantom{X} & \checkmark \\ 

\coda



\end{document}